\newcommand \listoftodos{\section*{Todo list} \@starttoc{tdo}}
\newcommand\l@todo[2]
\par \textit{#2}, \parbox{10cm}{#1}\par} \makeatother
\newcommand{\ZZ}{\mathbb{Z}}
\newcommand{\PP}{\mathbb{P}}
\newcommand{\EE}{\mathbb{E}}
\newcommand{\RR}{\mathbb{R}}
\newcommand{\XX}{\mathbb{X}}
\newcommand{\NN}{\mathbb{N}}
\newcommand{\GG}{\mathbb{G}}
\newcommand{\YY}{\mathbb{Y}}
\newcommand{\vp}{\varphi}
\newcommand\numberthis{\addtocounter{equation}{1}\tag{\theequation}}
\newcommand{\paro}{\vec{g}} 
\newcommand{\rparl}{\vec{y}}
\newcommand{\newH}{\widetilde{H}}
\newcommand{\id}{\mathbbm{1}}
\newcommand{\x}{\vec{x}}
\newcommand{\xp}{\vec{x'}}
\newcommand{\xpp}{\vec{x''}}
\newcommand{\xpr}{x'}
\newcommand{\leftvec}[2]{\big(#1 < \cdots < #2\big)}
\newcommand{\rightvec}[2]{(#1 > \cdots > #2)}
\newcommand{\y}{\vec{y}}
\newcommand{\yp}{\vec{y'}}
\newcommand{\ypp}{\vec{y''}}
\newcommand{\zp}{\vec{z'}}
\renewcommand{\wp}{\vec{w'}}
\newcommand{\leftexp}{\EE^{\vec{x}} \big[\newH(\vec{x}(1), \vec{y})\big]}
\newcommand{\onepart}{\mathbf{p}}
\newcommand{\ronepart}{\overleftarrow{\mathbf{p}}}
\theoremstyle{plain}
\newtheorem{theorem}{Theorem}[section]
\newtheorem{defin}[theorem]{Definition}
\newtheorem{prop}[theorem]{Proposition}
\newtheorem{lemma}[theorem]{Lemma}
\newtheorem{remark}[theorem]{Remark}
\begin{document}
\begin{abstract}
We prove that Sch\"{u}tz's ASEP Markov duality functional is also a Markov duality functional for the stochastic six vertex model. We introduce a new method that uses induction on the number of particles to prove the Markov duality.
\end{abstract}
\numberwithin{equation}{section}
\title{Markov Duality for Stochastic Six Vertex Model}
\author{Yier Lin}
\address{Y.\ Lin,
	Department of Mathematics, Columbia University,
	2990 Broadway, New York, NY 10027}
\email{yl3609@columbia.edu}
\maketitle

\section{Introduction}
\subsection{Stochastic six vertex model}
The stochastic six vertex model (S6V model) is a classical model in 2d statistical physics first introduced by Gwa and Spohn \cite{GS92}, as a special case of the six vertex (ice) model (see for example \cite{Lieb74} and \cite{Bax16}). 
We associate each vertex in $\ZZ^2$ with six types of configurations with weights parametrized by $0 < b_1, b_2 <1$, see Figure \ref{fig:vertexconfig}. The configurations chosen for two neighboring vertices need to be compatible in the sense that the lines keep flowing. We consider the lines from the south and the west as the inputs and the lines to the north and the east as the outputs. 
Each vertex is conservative in the sense that the number of input lines equals the number of output lines. The model is stochastic in the sense that when we fix the inputs, the weights of possible configurations sum up to $1$. 
\begin{figure}[ht]
\centering
\begin{adjustbox}{valign=t}
	\begin{tabular}{|c|c|c|c|c|c|c|}
		\hline
		Type & I & II & III & IV & V & VI \\
		\hline
		\begin{tikzpicture}
		\draw[fill][white] (0.5, 0) circle (0.08);
		\draw[very thick][white] (0, 0) -- (1,0);
		\draw[very thick][white] (0.5, -0.5) -- 
		(0.5,0.5);
		\node at (0.5, 0) {Configuration};
		\end{tikzpicture}
		&
		\begin{tikzpicture}
		\draw[fill] (0.5, 0) circle (0.08);
		\draw[very thick] (0, 0) -- (1,0);
		\draw[very thick] (0.5, -0.5) -- (0.5,0.5);
		\end{tikzpicture}
		&
		\begin{tikzpicture}
		\draw[very thick][dashed][white] (0, 0) -- (1,0);
		\draw[very thick][dashed][white] (0.5, -0.5) -- (0.5,0.5);
		\draw[fill] (0.5, 0) circle (0.08);
		\end{tikzpicture}
		&
		\begin{tikzpicture}
		\draw[very thick] (0, 0) -- (1,0);
		\draw[very thick][white][dashed] (0.5, -0.5) -- (0.5,0.5);
		\draw[fill] (0.5, 0) circle (0.08);
		\end{tikzpicture}
		&
		\begin{tikzpicture}
		\draw[very thick] (0, 0) -- (0.5,0);
		\draw[very thick] (0.5, 0) -- (0.5, 0.5);
		\draw[very thick][dashed][white] (0.5, 0) -- (1, 0);
		\draw[very thick][dashed][white] (0.5, -0.5) -- (0.5, 0);
		(0.5,0.5);
		\draw[fill] (0.5, 0) circle (0.08);
		\end{tikzpicture}
		&
		\begin{tikzpicture}
		\draw[very thick][dashed][white] (0, 0) -- (1,0);
		\draw[very thick] (0.5, -0.5) -- (0.5,0.5);
		\draw[fill] (0.5, 0) circle (0.08);
		\end{tikzpicture}
		&
		\begin{tikzpicture}
		\draw[very thick][dashed][white] (0, 0) -- (0.5,0);
		\draw[very thick][dashed][white] (0.5, 0) -- (0.5, 0.5);
		\draw[very thick] (0.5, 0) -- (1, 0);
		\draw[very thick] (0.5, -0.5) -- (0.5, 0);
		(0.5,0.5);
		\draw[fill] (0.5, 0) circle (0.08);
		\end{tikzpicture}
		\\
		\hline
		Weight 
		& 1 & 1 & $b_2$ & $1- b_2$ & $b_1$ & $1-b_1$\\
		\hline
	\end{tabular}
\end{adjustbox}
\caption{Six types of configurations for the vertex.}
\label{fig:vertexconfig}
\end{figure}
\bigskip
\\
The S6V model is a member of the KPZ universality class (see \cite{Cor12, Qua12} for a nice survey). We briefly review a few results that have been obtained for the S6V model. 
\cite{BCG16} proves that under step initial condition, the one point fluctuation of the S6V model height function is asymptotically Tracy-Widom GUE. One point fluctuations of the S6V model under more general initial condition including stationary is obtained in \cite{AB16, Agg16}. In a slightly different direction,  \cite{CGST18} shows that under scaling $\frac{b_1}{b_2} \to 1$ with $b_1$ fixed, the fluctuation of the S6V model height function converges weakly to the solution of KPZ equation. More recently, \cite{BG18, ST18} showed under a different scaling, the height fluctuation of the S6V model converges in finite dimensional distribution to the solution of stochastic telegraph equation. 
\bigskip
\\
In this paper, we consider the S6V model as an interacting particle system (see \cite{GS92} or Section 2.2 of \cite{BCG16}). Consider vertex configurations in the upper half-plane, we restrict ourselves to the boundary condition that there are no lines coming from the left boundary. Then the input lines coming from the bottom of the horizontal axis can be viewed as the trajectories of an exclusion-type particle system. We see the vertical axis as time variable and horizontal axis as  space variable. The vertex configurations compose several paths, which can be viewed as  trajectories of particles with the vertical lines denoting the particle location. As illustrated by Figure \ref{fig:S6Vparticle}, we cut our plane by the line $y = t - \frac{1}{2}$ and the particle location at time $t$ is give by the intersections (red points in the figure) of these trajectories with $y=  t - \frac{1}{2}$. 
To rigorously define our interacting particle system, we first introduce the following state spaces.
\begin{figure}
\begin{tikzpicture}
\draw[very thick][blue][dashed][->] (-1, 0) -- (12, 0);
\draw[very thick][blue][dashed][->] (1, -1) -- (1, 4);
\draw[very thick](2, -0.5) -- (2, 0);
\draw[very thick](4, -0.5) -- (4, 0);
\draw[very thick] (6, -0.5) -- (6, 0.5);
\draw[very thick](8, -0.5) -- (8, 0);
\draw[very thick] (2, 0) -- (4, 0);
\draw[very thick] (4, 0) -- (4, 0.5);
\draw[very thick] (4,0) -- (5,0);
\draw[very thick] (5,0) -- (5, 0.5);
\draw[very thick] (8, 0) -- (9, 0);
\draw[very thick] (9, 0) -- (9, 0.5);
\draw[very thick] (4, 0.5) -- (4, 1.5);
\draw[very thick] (5, 0.5) -- (5,1);
\draw[very thick] (5, 1) -- (6, 1);
\draw[very thick] (6, 1) -- (6, 1.5);
\draw[very thick] (6, 0.5) -- (6, 1);
\draw[very thick](6, 1) -- (9, 1);
\draw[very thick] (9, 0.5) -- (9, 1);
\draw[very thick] (9, 1) -- (10,1);
\draw[very thick] (9, 1) -- (9, 1.5);
\draw[very thick](10, 1) -- (10, 1.5);
\draw[very thick] (4, 1.5) -- (4, 2);
\draw[very thick](4, 2) -- (5, 2);
\draw[very thick] (5, 2) -- (5, 2.5);
\draw[very thick] (6,1.5) -- (6, 2);
\draw[very thick] (6, 2) -- (7,2);
\draw[very thick](7, 2) -- (7, 2.5);
\draw[very thick] (10, 1.5) -- (10, 2);
\draw[very thick] (10, 2) -- (11, 2);
\draw[very thick] (11, 2) -- (11,2.5);
\draw[very thick] (9, 1.5) -- (9, 2.5);
\draw[very thick] (5, 2.5) -- (5, 3);
\draw[very thick] (5, 3) -- (7, 3);
\draw[very thick] (7, 2.5) -- (7, 3);
\draw[very thick] (7 ,3) -- (7,4);
\draw[very thick] (7, 3) -- (9,3);
\draw[very thick] (9,3) -- (9, 4);
\draw[very thick] (9, 2.5) -- (9, 3);
\draw[very thick] (9, 3) -- (10, 3);
\draw[very thick] (10, 3) -- (10, 4);
\draw[very thick] (11, 2.5) -- (11, 3);
\draw[very thick](11, 3) -- (11, 4);
\draw[dashed] (0.5, -0.5) -- (12, -0.5);
\draw[dashed] (0.5, 0.5) -- (12, 0.5);
\draw[dashed] (0.5, 1.5) -- (12, 1.5);
\draw[dashed] (0.5, 2.5) -- (12, 2.5);
\draw[dashed] (0.5, 3.5) -- (12, 3.5);
\node at (0, -0.5) {$t = 0$};
\node at (0, 0.5) {$t = 1$};
\node at (0, 1.5) {$t = 2$};
\node at (0, 2.5) {$t = 3$};
\node at (0, 3.5) {$t = 4$};
\node at (5.5, -1.5) {\textbf{space}};
\node at (-1, 1.75) {\textbf{time}};
\draw[fill][red] (2, -0.5) circle (0.1); 
\draw[fill][red] (4, -0.5) circle (0.1); 
\draw[fill][red] (6, -0.5) circle (0.1); 
\draw[fill][red] (8, -0.5) circle (0.1); 
\draw[fill][red] (4, 0.5) circle (0.1); 
\draw[fill][red] (5, 0.5) circle (0.1); 
\draw[fill][red] (6, 0.5) circle (0.1); 
\draw[fill][red] (9, 0.5) circle (0.1); 
\draw[fill][red] (4, 1.5) circle (0.1); 
\draw[fill][red] (6, 1.5) circle (0.1); 
\draw[fill][red] (9, 1.5) circle (0.1); 
\draw[fill][red] (10, 1.5) circle (0.1); 
\draw[fill][red] (5, 2.5) circle (0.1); 
\draw[fill][red] (7, 2.5) circle (0.1); 
\draw[fill][red] (9, 2.5) circle (0.1); 
\draw[fill][red] (11, 2.5) circle (0.1); 
\draw[fill][red] (7, 3.5) circle (0.1); 
\draw[fill][red] (9, 3.5) circle (0.1); 
\draw[fill][red] (10, 3.5) circle (0.1); \draw[fill][red] (11, 3.5) circle (0.1);
\end{tikzpicture}
\caption{S6V model viewed as an interacting particle system.}
\label{fig:S6Vparticle}
\end{figure}
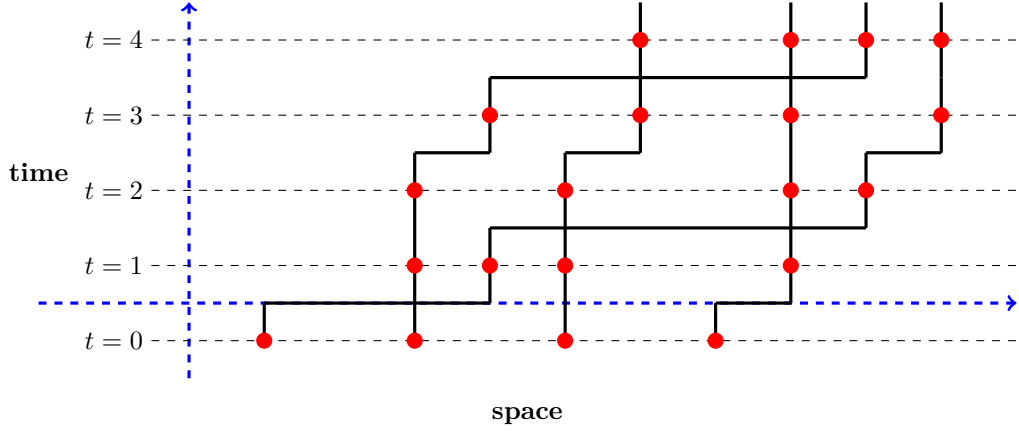 
\begin{defin}\label{def:statespace}
We define the space of left-finite particle occupation configuration $\GG$ to be 
\begin{equation*}
\GG = \left\{\vec{g} = (\cdots, g_{-1}, g_0, g_1, \cdots) \in \left\{0,1\right\}^\ZZ:\  \exists\  i \in \ZZ 
\ \text{so that } g_x =0,\  \forall x \leq i\right\},
\end{equation*} 
where $g_x$ is understood as the number of particles (either zero or one) at location $x$. We also define the space of left-finite particle location configuration to be 
\begin{equation*}
\XX = \left\{\vec{x} = (x_1 < x_2 < \cdots): x_i \in \ZZ \cup \{+\infty\} \text{ for every } i \in \NN \right\},
\end{equation*}
where $x_i$ stands for the location of $i$-th particle counting from the left. Note that there might be infinite or finite number of particles in our particle configuration. For the latter case, there exists some $m \in \NN$ so that $x_i = +\infty$ for $i \geq m$. It is straightforward that there is a bijection $\vp: \XX \to \GG$ defined by 
\begin{equation*}
\vec{g} = \vp (\vec{x}) \text{ such that } g_i = \id_{\left\{\text{there exists } n \in \NN \text{ so that } x_n  =  i\right\}} \text{ for every } i \in \ZZ.
\end{equation*}
\end{defin}
 Having specified our state space, we proceed to define the  particle interpretation of the S6V model as the following discrete-time Markov processes. The following definition is similar as the one that appears in Section 2.1 of \cite{CGST18}.
\begin{defin}\label{def:particlesystem}
We define the S6V location process, which is a discrete-time $\XX$-valued Markov process $\vec{x}(t) = (x_1 (t) < x_2(t) < \cdots )$ with the update rule (transition probability) from $\vec{x}(t)$ to $\vec{x}(t+1)$ specified as follows: 
\bigskip
\\
We denote $x_0(t) = -\infty$ for any $ t \in \ZZ_{\geq 0}$, this is just a convention to simplify the notation. We sequentially consider $i=1,2,\dots $ and update as following independent probabilities:  \bigskip
\\
(a) When $x_i(t) > x_{i-1}(t+1)$, we update $x_{i}(t)$ to $x_{i} (t+1)$ via
\begin{equation*}
\PP\left(x_{i}(t+1) - x_i(t) = n\right) = 
\begin{cases}
b_1, & \text{if} \ n =0;\\
(1 - b_1) (1-b_2) b_2^{n-1}  & \text{if} \ 1 \leq n \leq x_{i+1}(t) - x_{i} (t) -1;\\
(1- b_1) b_2^{n-1} & \text{if} \ n  = x_{i+1}(t) - x_{i} (t);\\
0 & \text{else}; 
\end{cases}
\end{equation*}
(b) When $x_i (t) = x_{i-1} (t+1) $, we update $x_{i}(t)$ to $x_{i} (t+1)$ via
\begin{equation*}
\PP\left(x_{i}(t+1) - x_i(t) = n\right) = 
\begin{cases}
(1-b_2) b_2^{n-1}  & \text{if} \ 1 \leq n \leq x_{i+1} (t) - x_{i} (t) -1;\\
b_2^{n-1} & \text{if} \ n  = x_{i+1}(t) - x_{i} (t); \\
0 & \text{else}.
\end{cases}
\end{equation*}
We also define the S6V occupation process $\vec{g}(t) = (g_x (t))_{x \in \ZZ} \in \GG$ by setting $\vec{g}(t) = \vp(\vec{x}(t))$ i.e. 
\[g_x(t) = \id_{\left\{\text{there exists } n \in \NN \text{ so that } x_n (t) =  x\right\}} \text{ for every } x \in \ZZ.\] Clearly, $\vec{g}(t)$ is a discrete-time $\GG$-valued Markov process. We remark that the occupation  process $\vec{g}(t)$ and location process $\vec{x}(t)$ are just two ways to describe the particle interpretation of the S6V model.
\end{defin}
For a left-finite particle configuration $\vec{g} \in \GG$, we define the height function $N_x (\vec{g})$ to be the total number of particles in this particle configuration that is on the left or at location $x$, i.e.
\begin{equation*}
N_x (\vec{g}) = \sum_{i \leq x} g_i.
\end{equation*}
Our result is a Markov duality between the S6V occupation process and its space reversal, which we define below:
\begin{defin}
Define the space of reversed $k$-particle location configuration $\YY^k = \left\{\vec{y} = (y_1 > \cdots > y_k): \vec{y} \in \ZZ^k \right\}$. The reversed $k$-particle S6V location process $\vec{y}(t) = (y_1 (t) > \cdots > y_k (t))$ is a $\YY^k$-valued Markov process so that $(-y_k(t) < \cdots < -y_1 (t))$ has the same update rule as the S6V location process.
\end{defin}


\subsection{Markov Duality}\label{def:mardual}
\begin{defin}
Given two discrete (continuous) time Markov processes $X(t) \in U$ and $Y(t) \in V$  and a function $H: U \times V \to \RR$, we say that $X(t)$ and $Y(t)$ are dual with respect to $H$ if for any $x \in U, y\in V$ and $t \in \ZZ_{\geq 0}$ ($t \in \RR_{\geq 0}$ for continuous time), we have 
\begin{equation*}
\EE^x\left[H(X(t), y)\right] = \EE^y \left[H(x, Y(t))\right]. 
\end{equation*}
Here we use $\EE^x$ to denote that we take the expectation under initial condition $X(0) = x$. Likewise, $\EE^y$ represents the expectation with initial condition $Y(0) = y$. 
\end{defin}
 
Markov duality has been found for different interacting particle systems including the contact process, voter model and symmetric simple exclusion process, see \cite{Lig12, Lig13}. It also plays an important role in the analysis of models in the KPZ universality class.  The first such example is the asymmetric simple exclusion process (ASEP), which is an interacting particle system on $\ZZ$ with at most one particle at each site. Each particle jumps to the left with rate $\ell$ and jumps to the right with rate $r$. If the site is already occupied by another particle, the jump is excluded.
\bigskip
\\  
We consider ASEP as a process $\vec{g}(t) =  (g_x(t))_{x \in \ZZ} \in \left\{0, 1\right\}^{\ZZ}$, where $g_x (t)$ is an indicator for the event that at time $t$, a particle is at site $x$. We call $\vec{g}(t)$ the ASEP occupation process. When the ASEP has finite $k$-particles, in terms of particle location, we also consider the $k$-particle ASEP location process $\vec{y}(t) = (y_1(t) > \cdots > y_k(t)) \in \YY^k$ where $y_i(t)$ denotes the location of $i$-th particle counting from the right at time $t$.   
\bigskip
\\
Sch\"{u}tz \cite{schutz97} derived the following ASEP duality using a spin chain representation: For any fixed $k \in \NN$,  the ASEP occupation process $\vec{g}(t)$ and the $k$-particle ASEP location process $\vec{y}(t)$ with the jump rate $r$ and $\ell$ reversed are dual with respect to the duality functional
\begin{equation}\label{eq:fdual}
H (\paro, \rparl) = \prod_{i=1}^k g_{y_i} q^{-N_{y_i} (\paro)},
\end{equation}
where $ q = \ell / r$. This generalizes the the  Markov duality satisfied by the symmetric simple exclusion process \cite{Lig12} where $\ell
$ and $r$ are set to be equal. We call \eqref{eq:fdual} Sch\"{u}tz's ASEP Markov duality functional.
\bigskip
\\
\cite{BCS12} uses a different approach to prove Sch\"{u}tz's result by directly applying the Markov generator on the duality functional. Further, they use this method to show that the processes $\vec{g}(t)$ and $\vec{y}(t)$ are also dual with respect to the functional
\begin{equation}\label{eq:sdual}
G (\vec{g}, \vec{y}) = \prod_{i=1}^k q^{-N_{y_i} (\vec{g})}.
\end{equation}
The ASEP is a continous time limit of the S6V model if we scale the parameter by $b_1 = \epsilon \ell$, $b_2 = \epsilon r$ and scale time by $\epsilon^{-1} t$ and shift the space to the right by $\epsilon^{-1} t$, see \cite{BCG16, Agg17}. 
Given the ASEP is the limit of the S6V model and enjoys the Markov duality with respect to the functionals in \eqref{eq:fdual} and \eqref{eq:sdual}, one might wonder if these functionals are the Markov duality functionals for the S6V model as well. Indeed, by setting $q = \frac{b_1}{b_2}$, \cite[Theorem 2.21]{CP16} justifies that the S6V occupation process and the reversed $k$-particle S6V location process are dual with respect
to the functional in \eqref{eq:sdual}\footnote{In fact, \cite[Theorem 2.21]{CP16} proves the duality  for a higher spin generalization of S6V model called stochastic higher spin vertex model.}.
\bigskip
\\
Our main result shows that the S6V model also enjoys a Markov duality with respect to the functional in \eqref{eq:fdual}.
\begin{theorem}\label{thm:main}
Consider the S6V model with parameter $b_1, b_2$ and set $q = \frac{b_1}{b_2}$. For any $k \in \NN$, the S6V occupation process $\vec{g}(t) \in \GG$ and the reversed $k$-particle S6V location process $\vec{y}(t) \in \YY^k$ are dual with respect to the function $H$ given in \eqref{eq:fdual}.
\end{theorem}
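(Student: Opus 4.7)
The plan is to prove Theorem~\ref{thm:main} by induction on $k$, the number of particles in the reversed S6V location process; this realizes the induction-on-particles scheme announced in the abstract.

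\emph{Base case} ($k=1$). The goal is to verify the single-particle duality
\[
\EE^{\vec{g}}\big[g_{y}(1)\, q^{-N_{y}(\vec{g}(1))}\big] \;=\; \EE^{y}\big[g_{y(1)}\, q^{-N_{y(1)}(\vec{g})}\big]
\]
by direct computation. Since S6V particles move only to the right and never cross, both expectations depend only on the configuration in a bounded neighborhood of $y$ on the left. Expanding with the explicit transition probabilities of Definition~\ref{def:particlesystem}, each side reduces to a truncated geometric sum in $b_1$ and $b_2$. A short case analysis (splitting on whether $y$ is already occupied at time $0$ and on the position of the nearest S6V particle strictly to the left of $y$), combined with the substitution $q=b_1/b_2$, matches the two sides term by term.

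\emph{Inductive step}. Assume the duality for $k-1$ and exploit the multiplicative structure
\[
H(\vec{g},\vec{y}) \;=\; g_{y_1}\, q^{-N_{y_1}(\vec{g})}\cdot H\big(\vec{g},(y_2,\ldots,y_k)\big)
\]
together with the sequential update of the reversed process: after the sign flip $x'_i = -y_i$, the particles $y_1,\ldots,y_k$ update in that order, with $y_1$ moving first and subject only to a constraint from $y_2$. The plan is to condition on the one-step evolution of $y_1$, apply the base-case duality at $y_1$ to commute the $g_{y_1}q^{-N_{y_1}}$ factor past the first substep of the reversed dynamics, and then invoke the inductive hypothesis on $(y_2,\ldots,y_k)$. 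Dually, on the left-hand side the S6V expectation is decomposed along the $y_1$-factor and the residual $H$-factor, with the non-crossing of S6V paths ensuring that the two decompositions align.

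The main obstacle is that once $y_1$ updates to $y_1(1)$, the subsequent updates of $y_2,\ldots,y_k$ depend on $y_1(1)$ through the case (a)/(b) distinction of the S6V rule, so the residual process is not literally the standalone reversed $(k-1)$-particle process to which the inductive hypothesis directly applies. Overcoming this will likely require either strengthening the inductive statement so that it carries an auxiliary boundary parameter (the position $y_1(1)$ that governs the case split for $y_2$), or showing that the $y_1(1)$-dependence of the inner update is cancelled exactly by the $g_{y_1(1)} q^{-N_{y_1(1)}(\vec{g})}$ factor via the single-particle duality. A further subtlety is that the S6V update on $\vec{g}$ is a single joint row update, so one must verify that the factors of $H$ decouple without double-counting vertex events near adjacent $y_i$'s; the multiplicative form of $H$, the non-crossing property of S6V paths, and the identity $q=b_1/b_2$ should together close the induction.
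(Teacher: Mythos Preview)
Your proposal has a genuine gap, and it is precisely the one you flag: once $y_1$ has updated to $y_1(1)$, the subsequent updates of $y_2,\ldots,y_k$ depend on $y_1(1)$ through the case (a)/(b) split, so the residual process is \emph{not} the standalone reversed $(k-1)$-particle process and the inductive hypothesis does not apply. You offer two possible rescues---strengthening the hypothesis with a boundary parameter, or finding an exact cancellation with the $g_{y_1(1)}q^{-N_{y_1(1)}(\vec{g})}$ factor---but carry out neither; as stated, the induction does not close. A symmetric difficulty arises on the $\vec{g}$-side: the S6V row update is a single coupled move, and the factors $g_{y_i}(1)q^{-N_{y_i}(\vec{g}(1))}$ for neighboring $y_i$'s are genuinely correlated, so the ``decoupling via non-crossing'' you invoke is not automatic.

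The paper sidesteps this by inducting not on $k$ but on the \emph{total} particle count $\ell+k$, where $\ell=|\vec{g}|$ (after first reducing to $|\vec{g}|<\infty$). The decisive move is to peel off the \emph{leftmost} $x$-particle $x_1$, not a $y$-particle: $x_1$ updates first in the S6V dynamics and has nothing to its left, so conditioning on $x_1(1)$ cleanly factors $(x_2,\ldots,x_\ell)$ off as an independent S6V process with $\ell-1$ particles (this is Lemma~\ref{lem:temp}). The inductive step is then a case analysis on the position of $y_k$ relative to $x_1,x_2,\ldots$, in each case expressing both sides of \eqref{eq:needtoshoww} as the \emph{same} linear combination of expectations with strictly fewer total particles. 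The base case $\min(\ell,k)=1$ is not a direct computation either: for $k=1$ one simply observes $H=G-D$, the difference of the two already-known S6V duality functionals \eqref{eq:sdual} and \eqref{eq:tdual}, while $\ell=1<k$ makes both sides trivially zero. Inducting on $\ell+k$ and peeling $x_1$ is exactly what makes the coupling issue you identified disappear.
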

\begin{remark}
We remark that the Markov duality in Theorem \ref{thm:main} appeared first in \cite[Theorem 2.23]{CP16} and was later used in proving \cite[Proposition 5.3]{CGST18}. In fact, \cite[Theorem 2.23]{CP16} claims a Markov duality for the stochastic higher spin vertex model (see \cite[Section 2]{CP16} for definition), which is a higher spin generalization of the S6V model (the stochastic higher spin vertex model has vertical and horizontal spin $\frac{I}{2}, \frac{J}{2}$, where $I, J \in \NN$. When $I = J = 1$, it degenerates to the S6V model). However, this Markov duality is false when $I > 1$. In fact, the author of the present paper found a counterexample which is recorded in the erratum \cite{CorwinPetrov2015_erratum}. For the S6V model, the Markov duality holds but the proof of \cite[Theorem 2.23]{CP16} still breaks down\footnote{The proof of \cite[Theorem 2.23]{CP16}
claims that the S6V duality \eqref{eq:fdual} can be deduced by taking the discrete gradient of the Markov duality functional in \eqref{eq:sdual}, which is not true when the number of particles $k$ in Theorem \ref{thm:main} is larger than $1$ \cite{CP18}.}. In this paper, we offer the first correct proof for this Markov duality.   
\end{remark}
\begin{remark}
It appears that the proof of Theorem \ref{thm:main} also adapts to the space inhomogeneous stochastic six vertex model, where we allow the parameters $b_1$, $b_2$ in Figure \ref{fig:vertexconfig} to vary at different locations $x \in \ZZ$ and are expressed by $b_{1,x}$ and $b_{2,x}$. Under the condition that there exists $q > 0$ such that $b_{1,x} = q b_{2,x}$ for all $x \in \ZZ$, Theorem \ref{thm:main} holds for this space inhomogeneous stochastic six vertex model as well. To avoid extra notation, we have opted not to state and prove this more general result here.
\end{remark}
\begin{remark}
It is natural to ask whether our method produces duality for stochastic higher spin vertex model with vertical and horizontal spin $\frac{I}{2}, \frac{J}{2}$. Using fusion, one can prove the same duality in Theorem \ref{thm:main} holds if we take $I = 1$ and $J \in \NN$ (the proof is similar to that of \cite[Corollary 3.2]{CP16}). For $I > 1$, 
as presented in Section \ref{sec:2}, our proof relies heavily on the structure of the duality functional \eqref{eq:fdual} and the particle exclusion property (i.e. at most one particle allows to stay in each location) of the S6V model. It is unclear how to adapt our method proving duality for stochastic higher spin vertex model such as \cite[Theorem 4.10]{kuan18}, since both of the duality functional and the model become more complicated.
\end{remark}
Duality has been obtained for generalization of the ASEP and S6V model using algebraic methods, see \cite{BS151, BS15, CGRS16, kuan16, kuan17, kuan18}. In particular, \cite{CGRS16} proves two ASEP$(q, j)$ (which is a higher spin generalization of ASEP) dualities based on the higher spin representations of $U_q[\mathfrak{sl}_2]$. In the spirit of \cite{CGRS16}, duality has also been proved  for multi-species version of ASEP \cite{BS15, kuan17}. \cite{kuan18} obtains a duality for the multi-species version of the stochastic higher spin vertex model via an algebraic construction. 
Instead of using algebraic tools to prove duality, our proof of Theorem \ref{thm:main}  follows a straightforward induction approach.
\bigskip
\\
We remark that the duality functional from \cite[Theorem 4.10]{kuan18} has a degeneration to S6V model. We state this degeneration here as a lemma. 
\begin{prop}
Consider the S6V model with parameter $b_1, b_2$ and set $q = \frac{b_1}{b_2}$. For any $k \in \NN$, the S6V occupation process $\vec{g}(t) \in \GG$ and the reversed $k$-particle S6V location process $\vec{y}(t) \in \YY^k$ are dual with respect to the functional
\begin{equation}\label{eq:tdual}
D(\vec{g}, \vec{y}) = \prod_{i=1}^k (1-g_{y_i}) q^{-N_{y_i} (\paro)}.
\end{equation}
\end{prop}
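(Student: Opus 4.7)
The plan is to deduce this proposition from the two dualities already available, namely Theorem \ref{thm:main} for $H$ and \cite[Theorem 2.21]{CP16} for $G$, by induction on $k$ in the same spirit as the paper's proof of Theorem \ref{thm:main}.

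The base case $k=1$ is immediate from linearity: the trivial identity $(1-g_y)\,q^{-N_y(\vec{g})} = q^{-N_y(\vec{g})} - g_y\,q^{-N_y(\vec{g})}$ gives $D(\vec{g}, y) = G(\vec{g}, y) - H(\vec{g}, y)$, and since both $G(\vec{g}, y)$ and $H(\vec{g}, y)$ are single-particle duality functionals, so is $D(\vec{g}, y)$ by linearity of expectation.

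For the inductive step $k \geq 2$, I would verify
\[\EE^{\vec{g}}\big[D(\vec{g}(1), \vec{y})\big] = \EE^{\vec{y}}\big[D(\vec{g}, \vec{y}(1))\big]\]
by mimicking the local decomposition used for Theorem \ref{thm:main}: expand both sides against the S6V update rule, and then condition on the occupancy of $\vec{g}$ at $y_k$, the leftmost coordinate of $\vec{y}$, on which the factor $(1-g_{y_k})$ in $D$ depends most delicately. The case $g_{y_k}=1$ kills the functional and must match trivially on both sides, while the case $g_{y_k}=0$ reduces, after collecting the $y_k$-local contributions of the update, to the analogous identity for $D$ with $k-1$ particles $y_1 > \cdots > y_{k-1}$, which is handled by the inductive hypothesis.

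The main obstacle is the combinatorial bookkeeping in this inductive step: the local identities underlying the proof of Theorem \ref{thm:main} are tailored to the occupancy indicator $g_{y_i}$, and translating them to the vacancy indicator $(1-g_{y_i})$ swaps the roles of occupied and empty sites in the relevant update cases and flips several signs. A cleaner alternative, in line with the paper's own phrasing, is to degenerate \cite[Theorem 4.10]{kuan18} directly: one recalls Kuan's duality functional for the multi-species stochastic higher spin vertex model, specializes it to the single-species spin-$\tfrac{1}{2}$ setting with vertex weights $b_1, b_2$, and checks that each factor of the functional collapses to $(1-g_{y_i})\,q^{-N_{y_i}(\vec{g})}$, at which point the proposition follows from Kuan's theorem with no further work.
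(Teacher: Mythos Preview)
Your second alternative --- specializing \cite[Theorem 4.10]{kuan18} --- is exactly what the paper does, and it is the only route that actually goes through here. The paper takes spin parameters $m_x=1$, species number $n=1$, substitutes $q \mapsto q^{1/2}$, and checks that Kuan's functional collapses factorwise to $(1-g_{y_i})q^{-N_{y_i}(\vec g)}$ after a space reversal; the proposition then follows from Kuan's theorem with no further work.

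Your primary proposal, the induction on $k$, has genuine problems. First a circularity: you invoke Theorem~\ref{thm:main} for the $k=1$ base case, but the paper's proof of Theorem~\ref{thm:main} (see Lemma~\ref{lem:inducbasis}) uses precisely this proposition to establish its own $k=1$ base case, so as written the two results would lean on each other. More seriously, the inductive step does not work as described. The claim that ``$g_{y_k}=1$ kills the functional and must match trivially on both sides'' is false. On the right-hand side $\vec g$ is fixed and $\vec y(1)$ moves: since $y_k(1)$ can land strictly to the left of $y_k$, the factor $(1-g_{y_k(1)})$ need not vanish. On the left-hand side the particle initially at $y_k$ can jump away in one step, so $g_{y_k}(1)$ may well be $0$. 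Neither side is forced to be zero, and there is no trivial matching. Symmetrically, when $g_{y_k}=0$ the $y_k$-factor on the left is $(1-g_{y_k}(1))$, which can vanish if a particle from the left lands at $y_k$; this does not peel off to leave a clean $(k-1)$-particle identity. Indeed, the paper remarks just after this proposition that for $k>1$ there is no easy way to combine the $G$ and $D$ dualities to obtain $H$, and the same obstruction blocks the recombination you propose in the other direction.
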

We remark that there is a misstatement in  \cite[Theorem 4.10]{kuan18}. The particles in the process $\mathcal{Z}$ and $\mathcal{Z}_{rev}$
were stated to jump to the left and to the right respectively (see pp.164 of \cite{kuan18}). However, after discussing with the author, we realize that the correct statement is that the particles in $\mathcal{Z}$ jump to the right and those in $\mathcal{Z}_{rev}$ jump to the left \cite{kuan}.
\begin{proof}
Taking the spin parameter $m_x = 1$ for all $x \in \ZZ$ and species number $n=1$ and substituting $q$ by $q^{1/2}$, the multi-species higher spin vertex model considered in \cite[Theorem 4.10]{kuan18} degenerates to the stochastic six vertex model (see \cite[Section 2.6.2]{kuan18} for detail). Referring to the duality functional  $\langle \xi | D(u_0) | \eta \rangle$ considered in \cite[Theorem 4.10]{kuan18} (note that $\xi$ is the configuration for the process $\mathcal{Z}$ and $\eta$ is the configuration for the process $\mathcal{Z}_{rev}$). By substituting the configuration 
$\eta$ by $\vec{g} = (g_x)_{x \in \ZZ}$ and the configuration 
$\xi$ by the $k$-particle location configuration  $\vec{y} = (y_1, \cdots, y_k )$, we obtain that the reversed S6V occupation process $\widetilde{g}(t)$ (with particles jumping to the left) is dual to the $k$-particle S6V location process $\widetilde{y}(t)$ (with particles jumping to the right) with respect to the functional 
\begin{equation*}
\widetilde{D}(\vec{g}, \vec{y}) = \prod_{i=1}^k (1- g_{y_i}) q^{-\sum_{z > y_i} g_z - \frac{1}{2} g_{y_i}} = \prod_{i=1}^k (1- g_{y_i}) q^{-\sum_{z \geq y_i} g_z}. 
\end{equation*}
In the last equality we used the fact that $g_{y_i} \in \left\{0, 1\right\}$. Since $\widetilde{g}(t)$ and $\widetilde{y}(t)$ are nothing but the space reversal of $\vec{g}(t)$ and $\vec{y}(t)$ in the lemma. After swapping the role of left and right (then $\sum_{z \geq y_i} g_z$ is exactly the height function $N_{y_i} (\vec{g})$), we readily obtain the duality in \eqref{eq:tdual}.
\end{proof}
When we take $k = 1$ in Theorem \ref{thm:main}, our duality can be simply derived by subtracting the functional $G$ in \eqref{eq:sdual} by the functional $D$ in \eqref{eq:tdual} (see Lemma \ref{lem:inducbasis}). However, it appears that when $k> 1$, there is no easy way to obtain our duality by combining the duality functionals in \eqref{eq:sdual} and \eqref{eq:tdual}. 
\bigskip
\\
Finally, we explain several applications of our duality. Theorem \ref{thm:main} combined with the other S6V duality \eqref{eq:sdual} are the main tools for proving the self-averaging property of the specific quadratic function of the S6V height function in \cite[Proposition 5.3]{CGST18}, which is the crux in proving the convergence of stochastic six vertex model to KPZ equation. In a different direction, by using duality, we can compute the exact moment formula of certain observables of our model. \cite{BCS12} uses the Sch\"{u}tz-type duality of ASEP to derive the moment generating function of the ASEP height function under Bernoulli step initial data. Applying a similar approach, we expect by using our duality and the S6V model Bethe ansatz eigenfunction given by \cite[Proposition 2.12]{CP16}, we can reprove the moment formula appearing in \cite[Theorem 4.12]{BCG16}  and \cite[Theorem 4.4]{AB16}. Since this application of the duality is not related to our paper, we do not pursue to give the proof here.
\subsection{Acknowledgment}
The author wants to thank Ivan Corwin for his helpful discussions and comments pertaining to the earlier draft of this paper and also for his advice on the aspect of paper writing. We wish to thank Promit Ghosal for his useful comments and Jeffrey Kuan for his helpful discussion about the result in his paper. We are also grateful to the anonymous referees for their valuable suggestions. The author was partially supported by the Fernholz Foundation's
``Summer Minerva Fellow" program and also received summer support from Ivan Corwin's
NSF grant DMS:1811143.

\section{Proof of Theorem \ref{thm:main}}\label{sec:2}
In this section, we prove Theorem \ref{thm:main}. We first introduce several notations for our proof. Define the space of $\ell$-particle location configuration
\begin{equation*}
\XX^\ell = \left\{\vec{x} = (x_1 < \cdots < x_\ell):  \x \in \ZZ^\ell \right\}.
\end{equation*}
We also denote by $|\vec{g}|$, $|\vec{x}|$ and $|\vec{y}|$ the number of particles in the particle configuration $\vec{g} \in \GG$, $\vec{x} \in \XX$ and $\vec{y} \in \YY^k$  (obviously $|\vec{y}| = k$ when $\vec{y} \in \YY^k$) respectively.
\bigskip
\\  
Referring to the Definition \ref{def:mardual} of Markov duality, we need to show that for any $k \in \NN$ and under any initial states $\vec{g} \in \GG$, $\vec{y} \in \YY^k$, we have 
\begin{equation*}
\EE^{\vec{g}}\big[H(\vec{g}(t), \vec{y})\big] = \EE^{\vec{y}}\big[H(\vec{g}, \vec{y}(t))\big].
\end{equation*}
By Markov property, it suffices to prove that the preceding equation holds for $t=1$, namely, for any $k \in \NN$ and any $\vec{g} \in \GG$, $\vec{y} \in \YY^k$, we have
\begin{equation}\label{eq:needtoshow}
\EE^{\vec{g}}\big[H(\vec{g}(1), \vec{y})\big] = \EE^{\vec{y}}\big[H(\vec{g}, \vec{y}(1))\big].
\end{equation}

Observing that $|\vec{y}|$ is finite (since $\vec{y} \in \YY^k$) whereas $|\vec{g}|$ can either be finite or infinite. We claim that it suffices to prove \eqref{eq:needtoshow} for all $\vec{g} \in \GG$ such that $|\vec{g}|$ is finite, here is the reason: Suppose we have proved \eqref{eq:needtoshow} for every $\vec{g} \in \GG$ with $|\vec{g}| < \infty$. For $\vec{g} \in \GG$ and $\vec{y} = (y_1 > \cdots > y_k) \in \YY^k$ such that $|\vec{g}| = \infty$, we consider a particle configuration $\vec{g'} \in \GG$ that corresponds with $\vec{g} \in \GG$ in the following way:
\begin{equation*}
g'_i  = 
\begin{cases}
g_i  \qquad &\text{if } i \leq y_1;\\
0 \qquad &\text{if } i > y_1.
\end{cases}
\end{equation*}
Clearly, $|\vec{g'}| < \infty$ and hence
$\EE^{\vec{g'}}\big[H(\vec{g}(1), \vec{y})\big] = \EE^{\vec{y}}\big[H(\vec{g'}, \vec{y}(1))\big].$
Additionally, observing that the particles in the configuration $\vec{g}$ which are on the right of $y_1$ have no contribution to both of the expectations $\EE^{\vec{g}}\big[H(\vec{g}(1), \vec{y})\big]$ and $\EE^{\vec{y}}\big[H(\vec{g}, \vec{y}(1))\big]$ (see Figure \ref{fig:infinite particle}), thus 
\begin{equation*}
\EE^{\vec{g}}\big[H(\vec{g}(1), \vec{y})\big]  = \EE^{\vec{g'}}\big[H(\vec{g}(1), \vec{y})\big], \qquad 
\EE^{\vec{y}}\big[H(\vec{g}, \vec{y}(1))\big]  = \EE^{\vec{y}}\big[H(\vec{g'}, \vec{y}(1))\big].
\end{equation*}
We conclude that $\EE^{\vec{g}}\big[H(\vec{g}(1), \vec{y})\big] = \EE^{\vec{y}}\big[H(\vec{g}, \vec{y}(1))\big]$ also holds for all $\vec{g} \in \GG$ with $|\vec{g}| = \infty$.
\bigskip
\\
\begin{figure}[ht]
\centering
\begin{tikzpicture}
\draw[very thick][->] (0,0) --(12,0);
\draw[very thick][<-] (0,1.3) --(12,1.3);
\draw[fill] (3,0) circle (0.1);
\draw[fill] (2,0) circle (0.1);
\draw[fill][blue] (6,0) circle (0.1);
\draw[fill][blue] (8,0) circle (0.1);
\draw[fill][blue] (9,0) circle (0.1);
\draw[fill] (1,1.3) circle (0.1);
\draw[fill] (3,1.3) circle (0.1);
\draw[fill] (5,1.3) circle (0.1);
\node[below=4pt] at (1,0) {$\dots$};
\node[below=4pt] at (10,0) {$\dots$};
\node[below=4pt] at (1,1.3) {$y_3$};
\node[below=4pt] at (3,1.3) {$y_{2}$};
\node[below=4pt] at (5,1.3) {$y_{1}$};
\node[below=4pt] at (5,1.3) {$y_{1}$};
\node[below=4pt] at (5, 0) {$y_{1}$};
\node[right=4pt] at (12,0) {$\vec{g}$};
\node[right=4pt] at (12,1.3) {$\vec{y}$};
\foreach \x/\xtext in {1,2,3,4,5,6,7,8,9,10,11}
\draw[thick] (\x cm, -0.1)--(\x cm, 0.1); 
\foreach \x/\xtext in {1,2,3,4,5,6,7,8,9,10,11}
\draw[thick] (\x cm, 1.2)--(\x cm, 1.4); 

\end{tikzpicture}
\caption{The picture above shows an example for the initial states $\vec{g} \in \GG$ and $\vec{y} \in \YY^3$. Since the particles in $\vec{g}$ jumps to the right and the particles in $\vec{y}$ jumps to the left (as illustrated by the arrows), the blue particles in $\vec{g}$ (that are on the right of  $y_1$) do not contribute to the computation of $\EE^{\vec{g}}\big[H(\vec{g}(1), \vec{y})\big]$ and $\EE^{\vec{y}}\big[H(\vec{g}, \vec{y}(1))\big]$.}
\label{fig:infinite particle}
\end{figure}
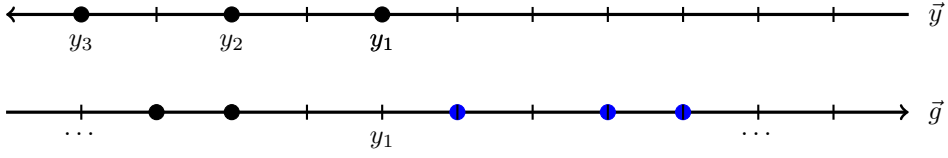
\bigskip
\\
It remains to prove \eqref{eq:needtoshow} when $|\vec{g}|$ is finite. In other words, we need to prove \eqref{eq:needtoshow} for all $\ell, k \in \NN$ and all $\vec{y} \in \YY^k$, $\vec{g} \in \GG$  satisfying $|\vec{g}| = \ell$.   
We apply induction according to $\ell + k$.
The first thing is to show that \eqref{eq:needtoshow} holds when $\min (\ell, k) = 1$, as the induction basis.
\begin{lemma}\label{lem:inducbasis}
When $\min(\ell, k) = 1$, \eqref{eq:needtoshow}  holds.
\end{lemma}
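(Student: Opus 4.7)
The plan is to derive the two sub-cases $k=1$ and $\ell=1$ separately, reducing each to facts already established in the paper, rather than running any generator computation.

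For the case $k=1$ (with any finite $\ell$), I would exploit the fact that $g_y \in \{0,1\}$ to write the algebraic identity
\[
H(\vec{g}, y) \;=\; g_y\, q^{-N_y(\vec{g})} \;=\; q^{-N_y(\vec{g})} - (1-g_y)\, q^{-N_y(\vec{g})} \;=\; G(\vec{g}, y) - D(\vec{g}, y),
\]
where $G$ is the duality functional from \eqref{eq:sdual} and $D$ is the one from \eqref{eq:tdual}. Markov duality is linear in the functional, so by \cite[Theorem 2.21]{CP16} applied to $G$ and by the Proposition proved just before the statement of Lemma \ref{lem:inducbasis} applied to $D$, the identity \eqref{eq:needtoshow} for $H$ at $k=1$ follows immediately by taking the difference of the two expectations.

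For the case $\ell = 1$ and $k \geq 2$, I would observe that the functional $H(\vec{g}, \vec{y}) = \prod_{i=1}^k g_{y_i} q^{-N_{y_i}(\vec{g})}$ already vanishes identically when $|\vec{g}| < k$: since $\vec{g}$ carries only one particle, at most one of the indicators $g_{y_i}$ can be nonzero, so the product $\prod_{i=1}^k g_{y_i} = 0$. Particle number is conserved by both the S6V occupation dynamics and the reversed $k$-particle location dynamics, so $|\vec{g}(1)| = 1 < k$ and $|\vec{y}(1)| = k$ almost surely. Hence both $H(\vec{g}(1), \vec{y})$ and $H(\vec{g}, \vec{y}(1))$ are identically zero, making \eqref{eq:needtoshow} trivially hold.

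Since the $k=1, \ell=1$ overlap is covered by the first case, the two cases together exhaust $\min(\ell, k) = 1$. I do not expect any real obstacle at this induction base: the only substantive ingredient is recognizing the decomposition $H = G - D$ at $k=1$, which bypasses all explicit generator computation by leveraging the two previously established dualities. The genuine difficulty of Theorem \ref{thm:main} lies in the induction step $\ell + k \mapsto \ell + k + 1$, not in this base case.
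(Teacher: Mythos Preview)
Your proposal is correct and follows essentially the same approach as the paper: for $k=1$ you use the identity $H = G - D$ and linearity of duality, and for $\ell=1$, $k\geq 2$ you observe both sides vanish identically. This matches the paper's proof (modulo a labeling typo in the paper, where the relation is written as ``$G = H - D$'' with $H$ and $D$ pointing to \eqref{eq:sdual} and \eqref{eq:tdual}; your version $H = G - D$ is the correct reading).
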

\begin{proof}[Proof of Lemma \ref{lem:inducbasis}]
Since $\min(\ell, k) = 1$, we have either $k > 1$ and $\ell = 1$, or $k=1$.
Note that
\begin{equation*}
H(\vec{g}(1), \vec{y}) = \prod_{i=1}^k g_{y_i}(1) q^{-N_{y_i} (\vec{g}(1))}.
\end{equation*}
If $k > 1$ and $\ell = 1$, since $\vec{g}(1)$ has only one non-zero component and $k > 1$, we have $\prod_{i=1}^k g_{y_i}(1) = 0$ for any $\vec{g}(1)$ and thus 
$\EE^{\vec{g}} \big[H(\vec{g}(1), \vec{y})\big] = 0.$
Similarly, we have $\EE^{\vec{y}}\big[H(\vec{g}, \vec{y}(1))\big] = 0$ hence the desired equality holds. 
\bigskip
\\
If $k = 1$, we note that $G(\vec{g}, \vec{y}) = H(\vec{g}, \vec{y}) - D(\vec{g}, \vec{y})$, where $H$ and $D$ are given by \eqref{eq:sdual} and \eqref{eq:tdual}. Since the subtraction of two duality functionals is still a duality functional (which follows from Definition \ref{def:mardual}), we obtain the desired \eqref{eq:needtoshow}.
\end{proof}
Before explaining how the induction works, we slightly reformulate \eqref{eq:needtoshow}. In order to keep track of the location of the particles, we utilize the S6V location process $\vec{x}(t)$ in Definition \ref{def:particlesystem}. Via the bijection $\varphi: \XX \to \GG$ (see Definition \ref{def:statespace}), we identify a configuration $\vec{g} \in \GG$ with $\vec{x} \in \XX$ and  define the function $\widetilde{H}$ as
\begin{equation*}
\widetilde{H}(\vec{x}, \vec{y}) = H(\varphi(\vec{x}), \vec{y}).
\end{equation*} 
By the relation $\vec{g}(t) = \varphi(\vec{x}(t)) $ between the S6V occupation process $\vec{g}(t)$ and the S6V location process $\vec{x}(t)$, \eqref{eq:needtoshow} can be paraphrased into the following: 
\bigskip
\\
For any $\ell, k \in \NN$ and any initial states $\vec{x} \in \XX^\ell$ and $\vec{y} \in \YY^k$, we have 
\begin{equation}\label{eq:needtoshoww}
\EE^{\vec{x}} \big[\newH(\vec{x}(1), \vec{y})\big] = \EE^{\vec{y}} \big[\newH(\vec{x}, \vec{y}(1))\big].
\end{equation} 

When $\min(\ell, k) = 1$, \eqref{eq:needtoshoww} is established via Lemma \ref{lem:inducbasis}. When $\min(\ell, k) \geq 2$, we define our induction hypothesis as
\begin{equation}\tag{$\mathrm{HYP_{\ell, k}}$} \label{eq:induchypo}
\eqref{eq:needtoshoww} \text{ holds for any } \vec{x} \in \XX^{n} \text{ and } \vec{y} \in \YY^{m} \text{ with } n + m < \ell +k.
\end{equation}
\\
It suffices to prove \eqref{eq:needtoshoww} for any $\vec{x} \in \XX^{\ell}$ and $\vec{y} \in \YY^k$ under \eqref{eq:induchypo}. We briefly explain our strategy: We decompose the LHS expectation of \eqref{eq:needtoshoww} into a combination of the expectations which are in the form of $\EE^{\xp} \big[\newH(\xp(1), \yp)\big]$ with $|\vec{x'}| + |\vec{y'}| < \ell + k $. A similar decomposition  occurs in the RHS expectation. By applying \eqref{eq:induchypo}, we get the desired \eqref{eq:needtoshoww}.
\bigskip
\\
In the sequel, when there is only one particle in the S6V location process, we denote by $\onepart(x, y)$ the one particle transition probability from location $x$ to $y$. Similarly $\ronepart(x, y)$ denotes the one particle transition probability from $x$ to $y$ for the reversed S6V location process. Clearly, $\onepart(x, y) = \ronepart(y, x)$ and 
\begin{equation*}
\onepart(x, y) = 
\begin{cases}
b_1 &\text{ if } y = x;\\
(1-b_1)(1-b_2)b_2^{y - x -1} & \text{ if } y> x;\\
0 & \text{ else. }
\end{cases}
\end{equation*}
In the sequel, we will frequently use the following elementary fact.
\begin{lemma}\label{lem:temp}
Consider S6V location processes
\begin{align*}
&\vec{x}(t) = \big(x_1 (t) < \dots < x_\ell(t)\big) \text{ with initial state } \vec{x} = (x_1 < \dots < x_\ell),\\
&\vec{x'}(t) = \big(x'_1 (t), \dots, x'_{\ell-1} (t)\big) \text{ with initial state } \vec{x'} = (x_2 < \dots < x_\ell),
\end{align*}
and $\vec{y} = (y_1 > \dots > y_k) \in \YY^k$, if $x_1 < y_k$, then 
\begin{equation}\label{eq:temp7}
\EE^{\vec{x}}\big[\newH(\vec{x}(1), \vec{y}) \id_{\{x_1 (1)  = x_1\}}\big] = q^{-k} b_1 \EE^{\vec{x'}}\big[\newH(\vec{x'}(1), \vec{y})\big].
\end{equation}
If $x_1 = y_k$, then 
\begin{equation}\label{eq:temp8}
\EE^{\vec{x}}\big[\newH(\vec{x}(1), \vec{y}) \id_{\{x_1 (1)  = x_1\}}\big] = q^{-k} b_1 \EE^{\vec{x'}}\big[\newH(\vec{x'}(1), \vec{y'})\big]
\end{equation}
where $\vec{y'} = (y_2 > \dots > y_k)$
\end{lemma}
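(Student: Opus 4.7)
The plan is to show that conditioning on the event $\{x_1(1)=x_1\}$ (i.e.\ the leftmost particle does not move) decouples the leftmost particle from the rest, after which the height function in the duality functional rearranges into the asserted form.

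First, I would compute the probability of the conditioning event and identify the residual dynamics. Since by convention $x_0(t)=-\infty$, the first particle always falls into case (a) of Definition \ref{def:particlesystem}, and the probability that $x_1(1)=x_1$ (i.e.\ $n=0$) is exactly $b_1$. Conditional on this event, the particle at $x_1$ plays no role for the update of the remaining particles: for $i\geq 2$, particle $i$ still satisfies $x_i(t)>x_{i-1}(t+1)$ (because $x_2>x_1=x_1(1)$ and inductively thereafter), so it uses case (a), whose transition probabilities depend only on $x_i(t)$ and the gap $x_{i+1}(t)-x_i(t)$. In particular, these transitions agree with those for the $(\ell-1)$-particle S6V location process $\vec{x'}(t)$ started from $(x_2<\cdots<x_\ell)$. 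Hence, conditionally on $\{x_1(1)=x_1\}$, the vector $(x_2(1),\dots,x_\ell(1))$ has the same law as $(x'_1(1),\dots,x'_{\ell-1}(1))$.

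Next, I would compare the duality functional on both sides. Write $\vec{g}(1)=\varphi(\vec{x}(1))$ and $\vec{g'}(1)=\varphi(\vec{x'}(1))$. If $x_1<y_k$, then for every $1\leq i\leq k$ the frozen particle at $x_1$ lies strictly to the left of $y_i$, so $g_{y_i}(1)=g'_{y_i}(1)$ while $N_{y_i}(\vec{g}(1))=1+N_{y_i}(\vec{g'}(1))$. Plugging into \eqref{eq:fdual} produces an extra factor $q^{-k}$ and yields
\[
\widetilde{H}(\vec{x}(1),\vec{y})\,\id_{\{x_1(1)=x_1\}}=q^{-k}\,\widetilde{H}(\vec{x'}(1),\vec{y})\,\id_{\{x_1(1)=x_1\}},
\]
and taking expectations gives \eqref{eq:temp7}. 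If instead $x_1=y_k$, then the frozen particle sits exactly at $y_k$ and accounts for $g_{y_k}(1)=1$ and $N_{y_k}(\vec{g}(1))=1$ (since $x_j(1)\geq x_j>y_k$ for $j\geq 2$), while for $i<k$ we again have $g_{y_i}(1)=g'_{y_i}(1)$ and $N_{y_i}(\vec{g}(1))=1+N_{y_i}(\vec{g'}(1))$. The factor at $i=k$ collapses to $q^{-1}$, and the factors for $i<k$ together contribute $q^{-(k-1)}\widetilde{H}(\vec{x'}(1),\vec{y'})$, giving the total $q^{-k}\widetilde{H}(\vec{x'}(1),\vec{y'})$ which upon taking expectations yields \eqref{eq:temp8}.

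The only delicate point is the conditional independence claim in the first step; once that is granted the bookkeeping of the height function is routine. I would therefore spend most care on verifying that, under $\{x_1(1)=x_1\}$, each of the particles $i\geq 2$ still updates according to branch (a) with unchanged parameters, so that the joint distribution of $(x_2(1),\dots,x_\ell(1))$ factors out of the event $\{x_1(1)=x_1\}$ and matches the law of $\vec{x'}(1)$; this relies on the sequential nature of the update in Definition \ref{def:particlesystem} and on the fact that the transition probabilities in branch (a) involve only the pair $(x_i(t),x_{i+1}(t))$.
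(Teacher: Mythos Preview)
Your approach matches the paper's: compute $\PP(x_1(1)=x_1)=b_1$, argue that conditionally on this event the remaining particles evolve as an independent S6V process started from $\vec{x'}$, and then factorize $\widetilde{H}$. The paper's own proof is terse and essentially asserts the decoupling with a reference to Figure~\ref{fig:case2a}, so your write-up is in fact more detailed.

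There is, however, a slip in your justification of the decoupling. You claim that under $\{x_1(1)=x_1\}$ every particle $i\geq 2$ satisfies $x_i(t)>x_{i-1}(t+1)$ and hence falls into branch~(a) of Definition~\ref{def:particlesystem}. This is false in general: particle~2 may jump all the way to $x_3$ (the last outcome listed in branch~(a), occurring with probability $(1-b_1)b_2^{\,x_3-x_2-1}$), in which case $x_3(0)=x_2(1)$ and particle~3 uses branch~(b). Your phrase ``inductively thereafter'' does not hold, and your closing paragraph repeats the same misconception.

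The correct argument is more modest. Only particle~2's branch choice is affected by the value of $x_1(1)$; conditionally on $x_1(1)=x_1<x_2$, particle~2 is in branch~(a), exactly as the leftmost particle of $\vec{x'}$ is (via the convention $x'_0(t)=-\infty$). From that point on the sequential update of particles $3,\dots,\ell$ depends only on $(x_2,\dots,x_\ell)$ and on the already-matched outcome for particle~2, so the conditional law of $(x_2(1),\dots,x_\ell(1))$ agrees with the law of $\vec{x'}(1)$ \emph{regardless} of which branches the later particles use. Once this is repaired, your height-function bookkeeping for both the cases $x_1<y_k$ and $x_1=y_k$ is correct, and \eqref{eq:temp7}, \eqref{eq:temp8} follow as you wrote.
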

\begin{proof}
We only prove for \eqref{eq:temp7}, the proof of \eqref{eq:temp8} is similar. Knowing $x_1 (1) = y_k$, the particles at $\vec{x'} = (x_{2}, \dots, x_\ell)$ update as an independent S6V model (see Figure \ref{fig:case2a}). 
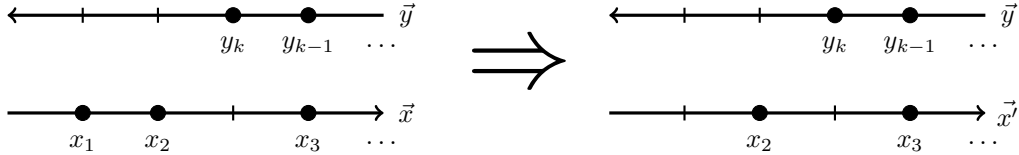
\begin{figure}[ht]
\centering
\begin{tikzpicture}
\draw[very thick][->] (0,-4) --(5,-4);
\draw[very thick][<-] (0,-2.7) --(5,-2.7);
\node at (5.3, -4) {$\vec{x}$};
\node at (5.3, -2.7) {$\vec{y}$};
\foreach \x/\xtext in {1,2,3,4}
\draw[thick] (\x cm, -4.1)--(\x cm, -3.9); 
\foreach \x/\xtext in {1,2,3,4}
\draw[thick] (\x cm, -2.8)--(\x cm, -2.6); 
\draw[fill] (1,-4) circle (0.1);
\draw[fill] (2,-4) circle (0.1);
\draw[fill] (4,-4) circle (0.1);
\draw[fill] (3,-2.7) circle (0.1);
\draw[fill] (4,-2.7) circle (0.1);
\node at (1, -4.4) {$x_{1}$};
\node at (2, -4.4) {$x_{2}$};
\node at (4, -4.4) {$x_{3}$};
\node at (5, -4.4) {$\dots$};
\node at (5, -3.1) {$\dots$};
\node at (3, -3.1) {$y_{k}$};
\node at (4, -3.1) {$y_{k-1}$}; 
\node at (6.8, -3.4) {{\fontsize{40}{48}\selectfont $\Rightarrow$}};
\begin{scope}[xshift = 1cm]
\node at (12.3, -4) {$\vec{x'}$};
\node at (12.3, -2.7) {$\vec{y}$};
\draw[very thick][->] (7,-4) --(12,-4);
\draw[very thick][<-] (7,-2.7) --(12,-2.7);
\foreach \x/\xtext in {8,9,10,11}
\draw[thick] (\x cm, -4.1)--(\x cm, -3.9); 
\foreach \x/\xtext in {8,9,10,11}
\draw[thick] (\x cm, -2.8)--(\x cm, -2.6);
\draw[fill] (9,-4) circle (0.1);
\draw[fill] (11,-4) circle (0.1);
\draw[fill] (10,-2.7) circle (0.1);
\draw[fill] (11,-2.7) circle (0.1);
\node at (9, -4.4) {$x_{2}$};
\node at (11, -4.4) {$x_{3}$};
\node at (11, -3.1) {$y_{k-1}$};
\node at (10, -3.1)  {$y_{k}$};
\node at (12, -4.4) {\dots};
\node at (12, -3.1) {\dots};
\end{scope}
\end{tikzpicture}
\caption{Given $x_1 (1) = x_1$, in one step update procedure, there is no interaction between the particle at $x_1$ and the remaining particles in $\vec{x}$. This allows us to treat the remaining particles  as an independent S6V location process starting from $\vec{x'} = (x_2 < \dots < x_\ell)$. This provides the intuition of \eqref{eq:temp7}.}	
\label{fig:case2a}
\end{figure}
Note that the probability of $x_1 (1) = x_1$ is $b_1$, furthermore, knowing $x_1 (1) = x_1$, we have
\begin{equation*}
\newH(\vec{x}(1), \vec{y}) = q^{-k} \newH\big((x_2 (1), \dots, x_k (1)), \vec{y}\big).
\end{equation*} 
Therefore,  
\begin{align*}
\EE^{\vec{x}}\big[\newH(\vec{x}(1), \vec{y}) \id_{\{x_1 (1) = x_1\}} \big] = q^{-k} b_1 \EE^{\xp} \big[\newH(\xp(1), \y)\big],
\end{align*}
which is the desired \eqref{eq:temp7}.
\end{proof}
\begin{proof}[Proof of Theorem \ref{thm:main}]
We denote the processes in \eqref{eq:needtoshoww} by $\vec{x}(t) = \big(x_1(t) < \cdots < x_\ell (t)\big)$, $\vec{y}(t) = (y_1(t) > \cdots > y_k (t))$ and the initial states by $\vec{x} = (x_1 < \cdots < x_\ell)$, $\vec{y} = (y_1 > \cdots > y_k)$.  We split our proof into different cases depending on the relation fo $\vec{x}$ and $\vec{y}$. 
\bigskip
\\
\textbf{Case (1):} 
$y_k \notin \left\{x_1, \cdots, x_\ell \right\}$ 
\bigskip
\\
Denote by $s$ the positive integer satisfying $x_s < y_k < x_{s+1}$. We consider the S6V location processes   
\begin{align*}
&\xp(t) = \big(\xpr_1(t) < \cdots < x'_{s} (t)\big)  \quad \text{ with initial state } \xp = (x_1 < \cdots < x_s),\\
&\xpp(t) = \leftvec{x''_1 (t)}{x''_{\ell-s} (t)} \text{ with initial state } \xpp = (x_{s+1} < \dots < x_\ell),
\end{align*}
and the reversed S6V location processes, see Figure \ref{fig:case1} (here we do not put arrow on $y'$ since it only has one particle). 
\begin{align*}
&y'(t) \text{ with initial state } y' = y_k, \\
&\ypp(t) = \big(y''_1(t) > \cdots > y''_{k-1} (t)\big)\text{ with initial state } \vec{y''} = (y_1 > \cdots > y_{k-1}).
\end{align*}
Observing $|\xp| + |y'|$ and $|\vec{x''}| + |\vec{y''}|$ are both less than $\ell + k$, hence via \eqref{eq:induchypo}
\begin{equation*}
\EE^{\xp} \big[\newH(\xp(1), y')\big] = \EE^{y'} \big[\newH(\xp, y'(1))\big], \qquad \EE^{\vec{x''}} \big[\newH(\xpp(1), \vec{y''})\big] = \EE^{\vec{y''}} \big[\newH(\vec{x''}, \ypp(1))\big].
\end{equation*}
To prove \eqref{eq:needtoshoww}, it suffices to show 
\begin{align}\label{eq:temp1}
\EE^{\vec{x}}\big[\newH(\vec{x}(1), \vec{y}) \big] = q^{-s (k-1)} \EE^{\xp} \big[\newH(\xp(1), y')\big], \EE^{\vec{x''}} \big[\newH(\xpp(1), \vec{y''})\big],
\\
\label{eq:temp2}
\EE^{\vec{y}} \big[\newH(\vec{x}, \vec{y}(1))\big] = q^{-s (k-1)} \EE^{y'} \big[\newH(\xp, y'(1))\big] \EE^{\vec{y''}} \big[\newH(\vec{x''}, \ypp(1))\big].
\end{align}
\begin{figure}[ht]
\begin{tikzpicture}
\draw[very thick][->] (0,0) --(12,0);
\draw[very thick][<-] (0,1.3) --(12,1.3);
\draw[fill] (3,0) circle (0.1);
\draw[fill] (2,0) circle (0.1);
\draw[fill] (6,0) circle (0.1);
\draw[fill] (8,0) circle (0.1);
\draw[fill] (9,0) circle (0.1);
\draw[fill] (5,1.3) circle (0.1);
\draw[fill] (7,1.3) circle (0.1);
\draw[fill] (9,1.3) circle (0.1);
\node at (1,-0.4) {$\dots$};
\node at (10,-0.4) {$\dots$};
\node[below=4pt] at (5,1.3) {$y_k$};
\node[below=4pt] at (7,1.3) {$y_{k-1}$};
\node[below=4pt] at (9,1.3) {$y_{k-2}$};
\node[below=4pt] at (5, 0) {$y_{k}$};
\node[below=4pt] at (2, 0) {$x_{s-1}$};
\node[below=4pt] at (3, 0) {$x_{s}$};
\node[below=4pt] at (6, 0) {$x_{s+1}$};
\node[below=4pt] at (8, 0) {$x_{s+2}$};
\node[below=4pt] at (9, 0) {$x_{s+3}$};
\node[right=4pt] at (12,0) {$\vec{x}$};
\node[right=4pt] at (12,1.3) {$\vec{y}$};
\foreach \x/\xtext in {0,1,2,3,4,5,6,7,8,9,10,11}
\draw[thick] (\x cm, -0.1)--(\x cm, 0.1); 
\foreach \x/\xtext in {1,2,3,4,5,6,7,8,9,10,11,12}
\draw[thick] (\x cm, 1.2)--(\x cm, 1.4); 
\node at (6, -1.5) {{\fontsize{40}{48}\selectfont $\Downarrow$}};
\draw[very thick][->] (0,-4) --(5,-4);
\draw[very thick][<-] (0,-2.7) --(5,-2.7);
\node at (5.3, -4) {$\vec{x'}$};
\node at (5.3, -2.7) {$y'$};
\foreach \x/\xtext in {0,1,2,3,4}
\draw[thick] (\x cm, -4.1)--(\x cm, -3.9); 
\foreach \x/\xtext in {1,2,3,4,5}
\draw[thick] (\x cm, -2.8)--(\x cm, -2.6); 
\draw[very thick][->] (7,-4) --(12,-4);
\draw[very thick][<-] (7,-2.7) --(12,-2.7);
\node at (12.3, -4) {$\vec{x''}$};
\node at (12.3, -2.7) {$\vec{y''}$};
\foreach \x/\xtext in {8,9,10,11}
\draw[thick] (\x cm, -4.1)--(\x cm, -3.9); 
\foreach \x/\xtext in {8,9,10,11}
\draw[thick] (\x cm, -2.8)--(\x cm, -2.6);
\draw[fill] (1,-4) circle (0.1);
\draw[fill] (2,-4) circle (0.1);
\draw[fill] (4,-2.7) circle (0.1);
\node at (1, -4.4) {$x_{s-1}$};
\node at (2, -4.4) {$x_{s}$};
\node at (0, -4.4) {$\dots$};
\node at (10, 0.9) {$\dots$};
\node at (4, -3.1) {$y_k$};
\draw[fill] (8,-4) circle (0.1); 
\draw[fill] (10,-4) circle (0.1);
\draw[fill] (11,-4) circle (0.1);
\draw[fill] (9,-2.7) circle (0.1);
\draw[fill] (11,-2.7) circle (0.1);
\node at (8, -4.4) {$x_{s+1}$};
\node at (10, -4.4) {$x_{s+2}$};
\node at (11, -4.4) {$x_{s+3}$};
\node at (11, -3.1) {$y_{k-2}$};
\node at (9, -3.1)  {$y_{k-1}$};
\node at (12, -4.4) {\dots};
\node at (12, -3.1) {\dots};
\end{tikzpicture}
\caption{By the form of duality functional \eqref{eq:fdual}, we see that $H(\vec{x}(1), \vec{y})$ is zero unless $x_s (1) = y_k$. Knowing $x_s(1) = y_k$, in one step update procedure there is no interaction between the particles at $\vec{x'} = (x_1, \dots, x_s)$ and those at $\vec{x''} = (x_{s+1}, \dots, x_\ell)$ (since $x_s(1) < x_{s+1}$). Hence, we treat the particles at $\vec{x'} = (x_{s+1}, \dots, x_\ell)$ as an independent S6V location process.
This provides the intuition of \eqref{eq:tempp1}.}
\label{fig:case1}
\end{figure}
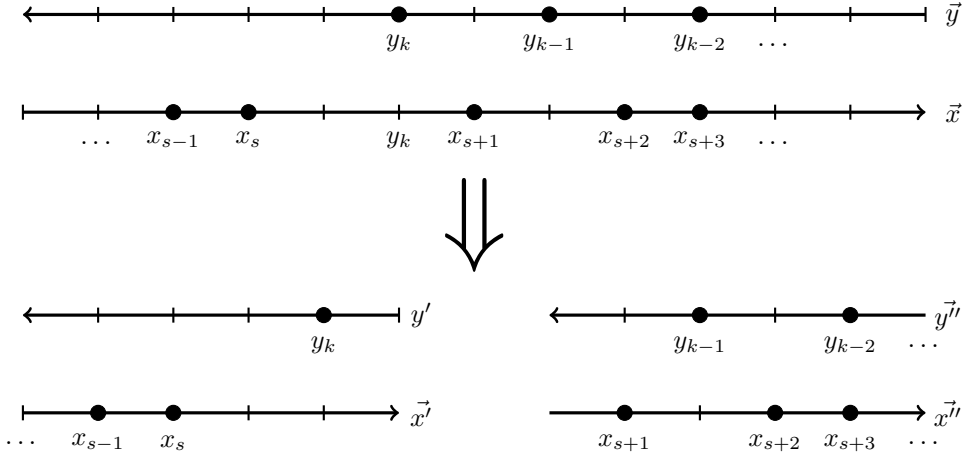
\bigskip
\\
We first show \eqref{eq:temp1}. Observing that by the update rule of $\vec{x}(t)$ defined in Definition \ref{def:particlesystem}, $H(\vec{x}(1), \vec{y}) = 0$ if $x_s(1) \neq y_k$, thus 
\begin{equation*}
\EE^{\vec{x}}\big[\newH(\vec{x}(1), \vec{y}) \big] = \EE^{\vec{x}}\big[\newH(\vec{x}(1), \vec{y}) \id_{\left\{x_s (1) = y_k\right\}}\big].
\end{equation*}
Knowing $x_s (1) = y_k$, we can treat the particles at $\vec{x''} = (x_{s+1}< \dots < x_\ell)$ as an independent S6V model. Therefore, it is straightforward that 
\begin{equation}\label{eq:tempp1}
\EE^{\vec{x}}\big[\newH(\vec{x}(1), \vec{y}) \id_{\left\{x_s (1) = y_k\right\}}\big] = q^{-s (k-1)} \EE^{\xp}\big[\newH(\xp(1), y') \id_{\{x'_s(1) = y_k\}}\big] \EE^{\xpp}\big[\newH(\xpp(1), \vec{y''})\big]. 
\end{equation}
The factor $q^{-s(k-1)}$ comes from knowing $x_s(1) = y_k$, 
\begin{equation*}
\newH(\vec{x}(1), \vec{y}) = q^{-s(k-1)} \newH\big((x_1(1), \dots, x_s(1)), y'\big) \newH\big((x_{s+1}(1), \dots, x_\ell(1)), \vec{y''}\big)
\end{equation*}
Using the fact that $\EE^{\xp}\big[\newH(\xp(1), y') \id_{\{x_s(1) = y_k\}}\big] = \EE^{\xp}\big[\newH(\xp(1), y') \big]$ in \eqref{eq:tempp1}, we conclude \eqref{eq:temp1}.\\ 
Likewise, to show  \eqref{eq:temp2}, we have 
\begin{align*}
\EE^{\vec{y}} \big[\newH(\vec{x}, \vec{y}(1))\big] &= \EE^{\vec{y}} \big[\newH(\vec{x}, \vec{y}(1)) \id_{\left\{y_{k-1} (1) \geq x_{s+1}\right\}}\big] 
=  q^{-s (k-1)} \EE^{y'}\big[\newH(\xp, y'(1))\big] \EE^{\vec{y''}}\big[\newH(\vec{x''}, \ypp(1)) \id_{\{y''_{k-1}(1) \geq x_{s+1}\}}\big].  
\end{align*}
Using the fact that $$\EE^{\vec{y''}}\big[\newH(\vec{x''}, \ypp(1)) \id_{\{y''_{k-1}(1) \geq x_{s+1}\}}\big] = \EE^{\vec{y''}}\big[\newH(\vec{x''}, \ypp(1))\big],$$ we conclude \eqref{eq:temp2}.
\bigskip
\\
\textbf{Case (2):} $y_k \in \left\{x_1, \cdots, x_\ell \right\}$
\bigskip
\\
We divide our discussion into three sub-cases.
\bigskip
\\
\textbf{Case (2a):} $y_k = x_1.$
\bigskip
\\
In this case, let us consider the  S6V location process and the reversed S6V location process 
\begin{align*}
&\xp(t) = (\xpr_1(t) < \cdots < x'_{\ell-1} (t) )  \text{ with initial state } \xp = (x_2 < \cdots < x_\ell),\\
&\yp(t) = \rightvec{y'_1 (t)}{y'_{k-1} (t)} \text{ with initial state } \yp = \rightvec{y_1}{y_{k-1}}.
\end{align*}
Since $|\xp| + |\yp| < \ell + k$,
the induction hypothesis \eqref{eq:induchypo} gives $ \EE^{\xp} \big[\newH(\xp(1), \yp)\big] = \EE^{\yp} \big[\newH(\xp, \yp(1))\big]$. To prove \eqref{eq:needtoshoww}, it suffices to show 
\begin{align}
\label{eq:temp3}
\EE^{\vec{x}}\big[\newH(\vec{x}(1), \vec{y}) \big] &= q^{-k} b_1 \EE^{\xp} \big[\newH(\xp(1), \yp)\big].\\
\label{eq:temp4}
\EE^{\vec{y}} \big[\newH(\vec{x}, \vec{y}(1))\big] 
&= q^{-k} b_1 \EE^{\yp} \big[\newH(\xp, \yp(1))\big].
\end{align}
We first justify \eqref{eq:temp3}.
Since $x_1  = y_k$ and $\vec{x}(t)$ starts from $\vec{x}$, it follows from  the update rule that $H(\vec{x}(1), \vec{y}) = 0$ unless $x_1 (1) = x_1$. Thus, 
\begin{equation*}
\EE^{\vec{x}}\big[\newH(\vec{x}(1), \vec{y}) \big] = \EE^{\vec{x}}\big[\newH(\vec{x}(1), \vec{y}) \id_{\{x_1 (1) = x_1\}} \big] 
\end{equation*}
Using Lemma \ref{lem:temp}, we conclude \eqref{eq:temp3}. Under the same reasoning,  
\begin{align*}
\EE^{\vec{y}} \big[\newH(\vec{x}, \vec{y}(1))\big] = \EE^{\vec{y}} \big[\newH(\vec{x}, \vec{y}(1)) \id_{\{y_k(1) = y_k\}}\big]
= q^{-k} b_1 \EE^{\yp} \big[\newH(\xp, \yp(1))\big],
\end{align*}
which concludes \eqref{eq:temp4}.
\bigskip
\\
\textbf{Case (2b):} $y_k = x_2 > x_1$
\bigskip
\\
The proof for this case is more involved than the previous ones.
We consider the S6V location processes and reversed S6V location process (see Figure \ref{fig:case2b})
\begin{align*}
&\xp(t) = (\xpr_1(t) < \cdots < x'_{\ell - 1} (t) )  \ \text{ with initial state } \xp = (x_2 < \cdots < x_\ell),\\
&\xpp(t) = \leftvec{x''_1 (t)}{x''_{\ell-2} (t)} \, \text{ with initial state } \xpp = \leftvec{x_3}{x_\ell},\\
&\yp(t) = \rightvec{y'_1(t)}{y'_{k-1} (t)} \ \, \text{ with initial state } \yp = \rightvec{y_1}{y_{k-1}}.
\end{align*}
To simplify our notation, we denote 
\begin{align*}
&L_1 = \EE^{\xp} \big[\newH(\xp(1), \y)\big], \qquad L_2 = \EE^{\xp} \big[\newH(\xp(1), \yp)\big], \qquad
L_3 = \EE^{\xpp} \big[\newH(\xpp(1), \yp)\big],\\
&R_1 = \EE^{\y} \big[\newH(\xp, \vec{y}(1))\big], \qquad R_2 = \EE^{\yp} \big[\newH(\xp, \yp(1))\big], \qquad R_3 = \EE^{\yp} \big[\newH(\xpp, \yp(1))\big].
\end{align*}
Since $|\xp| + |\y|, |\xp| + |\yp|, |\xpp| + |\yp|$ are all less than $\ell + k$, we have by induction hypothesis \eqref{eq:induchypo}
\begin{equation} \label{eq:2bidhypo}
L_1 = R_1, \qquad L_2 = R_2, \qquad L_3 = R_3.
\end{equation}
To prove \eqref{eq:needtoshoww}, it suffices to show that 
\begin{align}
\label{eq:temp5}
\EE^{\vec{x}} \big[\newH(\vec{x}(1), \vec{y})\big] = q^{-k} L_1 + b_2^{x_2 - x_1 -1} (q^{-k} L_2 + q^{-(2k-1)} (b_1 b_2 - b_1 - b_2) L_3), \\
\label{eq:temp6}
\EE^{\vec{y}} \big[\newH(\vec{x}, \vec{y}(1)) \big] = 
q^{-k} R_1 + b_2^{x_2 - x_1 -1} (q^{-k} R_2 + q^{-(2k-1)} (b_1 b_2 - b_1 - b_2)R_3).
\end{align}
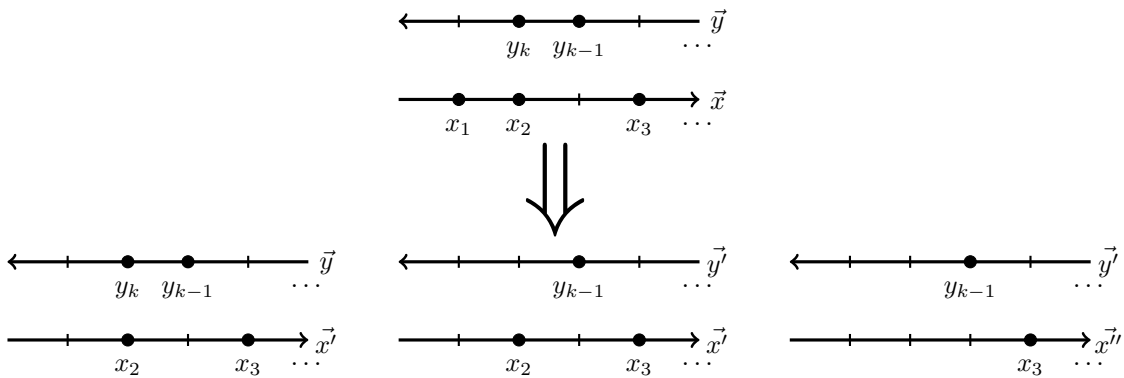
\begin{figure}[ht]
\centering 
\begin{tikzpicture}[scale = 0.8]
\draw[very thick][->] (1,0) --(6,0);
\draw[very thick][<-] (1,1.3) --(6,1.3);
\node at (6.3, 1.3) {$\vec{y}$};
\node at (6.3, 0) {$\vec{x}$};
\draw[fill] (3,0) circle (0.1);
\draw[fill] (2,0) circle (0.1);
\draw[fill] (5,0) circle (0.1);
\draw[fill] (3, 1.3) circle (0.1);
\draw[fill] (4, 1.3) circle (0.1);
\node at (6,-0.4) {$\dots$};
\node at (6,0.9) {$\dots$};
\node[below=4pt] at (3,1.3) {$y_k$};
\node[below=4pt] at (4,1.3) {$y_{k-1}$};
\node[below=4pt] at (5, 0) {$x_{3}$};
\node[below=4pt] at (2, 0) {$x_{1}$};
\node[below=4pt] at (3, 0) {$x_{2}$};
\foreach \x/\xtext in {2,3,4,5}
\draw[thick] (\x cm, -0.1)--(\x cm, 0.1); 
\foreach \x/\xtext in {2,3,4,5}
\draw[thick] (\x cm, 1.2)--(\x cm, 1.4); 
\node at (3.6, -1.5) {{\fontsize{40}{48}\selectfont $\Downarrow$}};
\begin{scope}[xshift = -6.5cm, yshift = -4cm]
\draw[very thick][->] (1,0) --(6,0);
\draw[very thick][<-] (1,1.3) --(6,1.3);
\draw[fill] (3,0) circle (0.1);
\draw[fill] (5,0) circle (0.1);
\draw[fill] (3, 1.3) circle (0.1);
\draw[fill] (4, 1.3) circle (0.1);
\node at (6,-0.4) {$\dots$};
\node at (6,0.9) {$\dots$};
\node at (6.3, 1.3) {$\vec{y}$};
\node at (6.3, 0) {$\vec{x'}$};
\node[below=4pt] at (3,1.3) {$y_k$};
\node[below=4pt] at (4,1.3) {$y_{k-1}$};
\node[below=4pt] at (5, 0) {$x_{3}$};
\node[below=4pt] at (3, 0) {$x_{2}$};
\foreach \x/\xtext in {2,3,4,5}
\draw[thick] (\x cm, -0.1)--(\x cm, 0.1); 
\foreach \x/\xtext in {2,3,4,5}
\draw[thick] (\x cm, 1.2)--(\x cm, 1.4); 
\end{scope}

\begin{scope}[xshift = 0cm, yshift = -4cm]
\draw[very thick][->] (1,0) --(6,0);
\draw[very thick][<-] (1,1.3) --(6,1.3);
\draw[fill] (3,0) circle (0.1);
\draw[fill] (5,0) circle (0.1);
\draw[fill] (4, 1.3) circle (0.1);
\node at (6,-0.4) {$\dots$};
\node at (6,0.9) {$\dots$};
\node[below=4pt] at (4,1.3) {$y_{k-1}$};
\node[below=4pt] at (5, 0) {$x_{3}$};
\node[below=4pt] at (3, 0) {$x_{2}$};
\foreach \x/\xtext in {2,3,4,5}
\draw[thick] (\x cm, -0.1)--(\x cm, 0.1); 
\foreach \x/\xtext in {2,3,4,5}
\draw[thick] (\x cm, 1.2)--(\x cm, 1.4); 
\node at (6.3, 1.3) {$\vec{y'}$};
\node at (6.3, 0) {$\vec{x'}$};
\end{scope}

\begin{scope}[xshift = 6.5cm, yshift = -4cm]
\draw[very thick][->] (1,0) --(6,0);
\draw[very thick][<-] (1,1.3) --(6,1.3);
\draw[fill] (5,0) circle (0.1);
\draw[fill] (4, 1.3) circle (0.1);
\node at (6,-0.4) {$\dots$};
\node at (6,0.9) {$\dots$};
\node[below=4pt] at (4,1.3) {$y_{k-1}$};
\node[below=4pt] at (5, 0) {$x_{3}$};
\foreach \x/\xtext in {2,3,4,5}
\draw[thick] (\x cm, -0.1)--(\x cm, 0.1); 
\foreach \x/\xtext in {2,3,4,5}
\draw[thick] (\x cm, 1.2)--(\x cm, 1.4); 
\node at (6.3, 1.3) {$\vec{y'}$};
\node at (6.3, 0) {$\vec{x''}$};
\end{scope}
\end{tikzpicture}
\caption{When $x_2 = y_k$, we consider three pairs of S6V models starting with less than $|\vec{x}| + |\vec{y}| = \ell + k$ number of particles, we prove  \eqref{eq:needtoshoww} via expressing $\EE\big[\newH(\vec{x}(1), \vec{y})\big]$ (resp. $\EE\big[\newH(\vec{x}, \vec{y}(1))\big]$) in terms of $L_1, L_2, L_3$ (resp. $R_1, R_2, R_3$) and using the induction hypothesis \eqref{eq:2bidhypo}.}
\label{fig:case2b}
\end{figure}
\bigskip
\\
Let us show \eqref{eq:temp5} first. Since $x_2 = y_k$, by Lemma \ref{lem:temp}, 
\begin{equation}\label{eq:2brelation}
L_1 = b_1 q^{-k} L_3. 
\end{equation}
Expanding the LHS expectation of \eqref{eq:needtoshoww} as following (according to the update rule, $x_1(1)$ can not exceed $x_2$)
\begin{equation}\label{eq:2be}
\EE^{\vec{x}} \big[\newH(\vec{x}(1), \vec{y})\big] = \EE^{\vec{x}} \big[\newH(\vec{x}(1), \vec{y}) \id_{\left\{x_1 (1) < x_2\right\}}\big] + \EE^{\vec{x}} \big[\newH(\vec{x}(1), \vec{y}) \id_{\left\{x_1 (1) = x_2\right\}}\big].
\end{equation}
For the first term on the RHS of \eqref{eq:2be}, given $x_1 (1) < x_2$, the particles at $x_2, \dots, x_\ell$ update as an independent S6V model.
Using the fact that $$\PP(x_1 (1) < x_2) = \sum_{y = x_1}^{x_2 - 1} \onepart(x, y ) = 1 - (1-b_1) b_2^{x_2 - x_1 - 1}$$ and knowing $x_1 (1) < x_2$
\begin{equation*}
\newH(\vec{x}(1). \vec{y}) = q^{-k} \newH\big((x_2 (1), \dots, x_\ell (1)), \vec{y}\big),
\end{equation*}
one has 
\begin{align*}
\numberthis \label{eq:2bfirst}
\EE^{\vec{x}} \big[\newH(\vec{x}(1), \vec{y}) \id_{\left\{x_1 (1) < x_2\right\}} \big]
=(1 -  (1 - b_1) b_2^{x_2 -  x_1 - 1}) q^{-k} L_1.
\end{align*}
Let us compute the second term on the RHS of \eqref{eq:2be},  
\begin{equation}\label{eq:2bes}
\EE^{\vec{x}} \big[\newH(\vec{x}(1), \vec{y}) \id_{\left\{x_1 (1) = x_2\right\}}\big] =\sum_{\substack{\vec{z} = (z_1 < \cdots < z_\ell)\\ z_1 = x_2}} \PP^{\vec{x}} \big(\vec{x}(1) = \vec{z}\big) \newH(\vec{z}, \vec{y}).
\end{equation}
For  $\zp = (z_2 < \cdots < z_\ell)$ and $\vec{z} = (z_1 < \cdots < z_\ell)$ with $z_1 = x_2$, we have
\begin{equation}\label{eq:2bess}
\begin{split}
\PP^{\vec{x}}\big(\vec{x}(1) = \vec{z}\big) &= b_2^{x_2 - x_1 - 1} \PP^{\xp}\big(\xp(1) = \zp\big),\\
\newH(\vec{z}, \vec{y}) &= q^{-k} \newH(\zp, \yp).
\end{split}
\end{equation}
Plugging the expression on the RHS of \eqref{eq:2bess} into \eqref{eq:2bes} gives 
\begin{align*}
\EE^{\vec{x}} \big[\newH(\vec{x}(1), \vec{y}) \id_{\left\{x_1 (1) = x_2\right\}}\big] &=  q^{-k} b_2^{x_2  - x_1 - 1} \sum_{\substack{\zp = (z_2< \cdots< z_\ell)\\ z_2 > x_2}}  \PP^{\xp} \big(\xp(1) = \zp\big) \newH(\zp, \yp),\\ 
&= q^{-k} b_2^{x_2 - x_1 - 1} \EE^{\xp} \big[\newH(\xp(1), \yp) \id_{\left\{x'_1 (1) \neq x_2 \right\}}\big].
\end{align*}
Consequently, 
\begin{align*}
\numberthis \label{eq:2bees}
\EE^{\vec{x}} \big[\newH(\vec{x}(1), \vec{y}) \id_{\left\{x_1 (1) = x_2\right\}}\big] = q^{-k} b_2^{x_2 - x_1 - 1} \bigg(L_2 - \EE^{\xp} \big[\newH(\xp(1), \yp) \id_{\left\{x'_1(1)  = x_2\right\}}\big]\bigg).
\end{align*}
It is straightforward that 
\begin{align*}
\EE^{\xp} \big[\newH(\xp(1), \yp) \id_{\left\{x'_1(1)  = x_2\right\}}\big] = q^{-(k-1)} b_1 \EE^{\xpp} \big[\newH (\xpp(1), \yp)\big] = q^{-(k-1)} b_1 L_3.
\end{align*}
Substituting this back to \eqref{eq:2bees} gives
\begin{align*}
\numberthis \label{eq:2bsec}
\EE^{\vec{x}} \big[\newH(\vec{x}(1), \vec{y}) \id_{\left\{x_1 (1) = x_2\right\}}\big] &= q^{-k} b_2^{x_2 - x_1 - 1} L_2  - q^{-(2k-1)} b_1 b_2^{x_2 - x_1 - 1} L_3.
\end{align*}
Note that the LHS of \eqref{eq:2bfirst} and \eqref{eq:2bsec} are the first and second terms on the RHS of \eqref{eq:2be}, one has
\begin{align*}
\EE^{\vec{x}} \big[\newH(\vec{x}(1), \vec{y})\big] &= (1 - (1- b_1) b_2^{x_2 -  x_1 - 1}) q^{-k} L_1 +( q^{-k} b_2^{x_2 - x_1 - 1} L_2  - q^{-(2k-1)} b_1 b_2^{x_2 - x_1 - 1} L_3),\\ 
&= q^{-k} L_1 + b_2^{x_2 - x_1 -1} (q^{-k} L_2 + q^{-(2k-1)} (b_1 b_2 - b_1 - b_2) L_3).
\end{align*}
Therefore, we conclude \eqref{eq:temp5}. Note that in the last line above we used the $L_1 = b_1 q^{-k} L_3$ provided by \eqref{eq:2brelation} and the relation $b_1 = q b_2$, 
\bigskip
\\
We turn our attention to demonstrate \eqref{eq:temp6}. Since $y_k = x_2$, according to the update rule of $\vec{y}(t) = (y_1(t) > \cdots > y_k(t))$ with initial state $\vec{y}$, the only possible case for   $\newH(\vec{x}, \vec{y}(1)) \neq 0$ is either $y_k (1) = x_1$ or $y_k (1) = x_2$. Therefore, we have
\begin{equation}\label{eq:2bye}
\EE^{\vec{y}} \big[\newH(\vec{x}, \vec{y}(1)) \big] = \EE^{\vec{y}} \big[\newH(\vec{x}, \vec{y}(1)) \id_{\left\{y_k(1) = x_2\right\}} \big] + \EE^{\vec{y}} \big[\newH(\vec{x}, \vec{y}(1)) \id_{\left\{y_k(1) = x_1\right\}} \big].
\end{equation}
For the first term on the RHS of \eqref{eq:2bye}, we readily have 
\begin{align*}\numberthis \label{eq:2bye1}
\EE^{\vec{y}} \big[\newH(\vec{x}, \vec{y}(1)) \id_{\left\{y_k (1) = x_2\right\}}\big] = q^{-k} \EE^{\y} \big[\newH(\xp, \vec{y}(1)) \id_{\left\{y_k (1) = x_2\right\}}\big] = q^{-k} \EE^{\y} \big[\newH(\xp, \vec{y}(1))\big]  = q^{-k} R_1.
\end{align*}
The first equality above is due to the fact that the condition $y_{k} (1) = x_2$ implies $\newH(\vec{x}, \vec{y}(1)) = q^{-k } \newH(\vec{x'}, \vec{y}(1))$.
\bigskip
\\ 
For the second term on the RHS of \eqref{eq:2bye}, we expand the expectation (using the condition $x_k = y_2$) 
\begin{align}\label{eq:2byee}
\EE^{\vec{y}} \big[\newH(\vec{x}, \vec{y}(1)) \id_{\left\{y_k(1) = x_1\right\}} \big]
= \mathbf{A} + \mathbf{B},
\end{align}
where 
\begin{equation*}
\mathbf{A} = \sum_{\substack{\vec{w} = (w_1 > \cdots > w_k)\\ w_k = x_1, w_{k-1} > x_2}} \PP^{\vec{y}}\big(\vec{y}(1) = \vec{w}\big) \newH(\vec{x}, \vec{w}),\qquad
\mathbf{B} = \sum_{\substack{\vec{w} = (w_1 > \cdots > w_k)\\ w_k = x_1, w_{k-1} = x_2}} \PP^{\vec{y}}\big(\vec{y}(1) = \vec{w}\big) \newH(\vec{x}, \vec{w})
\end{equation*}
It is easy to check that given $\vec{w} = (w_1 > \cdots > w_k)$ and $\wp = (w_1 > \cdots > w_{k-1})$ with condition $w_k = x_1$ and $w_{k-1} > x_2$  implies
\begin{equation*}
\PP^{\vec{y}} \big(\vec{y}(1) = \vec{w}\big) = \ronepart(x_2, x_1)  \PP^{\yp} \big(\yp(1) = \wp\big), \qquad \newH(\vec{x}, \vec{w}) = q^{-(2k-1)} \newH(\xpp, \wp).  
\end{equation*}
Therefore, 
\begin{align*}
\mathbf{A} = \PP\big(\vec{y}(1) = \vec{w}\big) \newH(\vec{x}, \vec{w}) &=  q^{-(2k-1)} \ronepart(x_2, x_1) \sum_{\substack{\wp = (w_1 > \cdots > w_{k-1}) \\ w_{k-1} > x_2}}  \PP^{\yp} \big(\yp(1) = \wp\big) \newH(\xpp, \wp),\\
&=q^{-(2k - 1)} (1-b_1) (1-b_2) b_2^{x_2 - x_1 - 1} \EE^{\yp} \big[\newH(\xpp, \yp(1))\big],\\ 
\end{align*}
Using $\EE^{\yp} \big[\newH(\xpp, \yp(1))\big] = R_3$,
\begin{align*}
\numberthis \label{eq:2byee1}
\mathbf{A} = q^{-(2k - 1)} (1-b_1) (1-b_2) b_2^{x_2 - x_1 - 1} R_3.
\end{align*}
Similarly, given $\vec{w} = (w_1 > \cdots > w_k)$ and $\wp = (w_1 > \cdots > w_{k-1})$ with  $w_k = x_1$ and $w_{k-1} = x_2$, we have 
\begin{align*}
\PP^{\vec{y}} \big(\vec{y}(1) = \vec{w}\big)  = b_2^{x_2  - x_1 - 1} \PP^{\yp} \big(\vec{y'}(1) = \wp\big), \qquad \newH(\vec{x}, \vec{w}) = q^{-k} \newH(\xp, \wp).
\end{align*}
Thus, 
\begin{align*}
\mathbf{B} = \sum_{\substack{\vec{w} = (w_1 > \cdots > w_k)\\ w_k = x_1, w_{k-1} = x_2}} \PP^{\vec{y}}\big(\vec{y}(1) = \vec{w}\big) \newH(\vec{x}, \vec{w})
& = q^{-k} b_2^{x_2 - x_1 - 1}  \sum_{\substack{\wp = (w_1 > \cdots > w_{k-1})\\ w_{k-1} = x_2}} \PP^{\yp}\big(\yp(1) = \wp\big)   \newH(\xp, \wp), \\ 
& = q^{-k} b_2^{x_2 - x_1 -1}  \EE^{\yp} \big[\newH(\xp, \yp(1)) \id_{\left\{y'_{k-1} (1) = x_2\right\}}\big].
\end{align*}
Consequently, one has
\begin{align*}
\numberthis \label{eq:2byeee}
\mathbf{B} = q^{-k} b_2^{x_2 - x_1 -1}  \bigg(R_2 - \EE^{\yp} \big[\newH(\xp, \yp(1)) \id_{\left\{y'_{k-1} (1) > x_2\right\}}\big]  \bigg).
\end{align*}
Under event $\left\{y'_{k-1} (1) > x_2\right\}$, we have $\newH(\xp, \yp(1)) = q^{-(k-1)} \newH(\xpp, \yp(1))$ and hence
 $$\EE^{\yp} \big[\newH(\xp, \yp(1)) \id_{\left\{y'_{k-1} (1) > x_2\right\}} \big] = q^{-(k-1)} \EE^{\yp} \big[\newH(\xpp, \yp(1)) \id_{\left\{y'_{k-1} (1) > x_2\right\}} \big] = q^{-(k-1)} R_3.$$
We obtain from \eqref{eq:2byeee} that 
\begin{align*}\numberthis \label{eq:2byee2}
\mathbf{B}
= q^{-k} b_2^{x_2 - x_1 - 1} (R_2 - q^{-(k-1)} R_3).
\end{align*}
Recall \eqref{eq:2byee} that $\EE^{\vec{y}} \big[\newH(\vec{x}, \vec{y}(1)) \id_{\left\{y_k(1) = x_1\right\}} \big]
= \mathbf{A} + \mathbf{B}$, using \eqref{eq:2byee1} and \eqref{eq:2byee2}, we get
\begin{equation}\label{eq:2bye2}
\EE^{\vec{y}} \big[\newH(\vec{x}, \vec{y}(1)) \id_{\left\{y_k(1) = x_1\right\}} \big]  = q^{-(2k - 1)} (1-b_1) (1-b_2) b_2^{x_2 - x_1 - 1} R_3 + q^{-k} b_2^{x_2 - x_1 - 1} (R_2 - q^{-(k-1)} R_3).
\end{equation}
Note that the LHS of \eqref{eq:2bye1} and \eqref{eq:2bye2} are the first and second term on the RHS of \eqref{eq:2bye}, hence  
\begin{align*}
\EE^{\vec{y}} \big[\newH(\vec{x}, \vec{y}(1)) \big] &= q^{-k} R_1  +  q^{-(2k - 1)} (1-b_1) (1-b_2) b_2^{x_2 - x_1 - 1} R_3 + q^{-k} b_2^{x_2 - x_1 - 1} (R_2 - q^{-(k-1)} R_3),\\ 
&= q^{-k} R_1 + b_2^{x_2 - x_1 -1} (q^{-k} R_2 + q^{-(2k-1)} (b_1 b_2 - b_1 - b_2)R_3).
\end{align*}
We have proved the desired \eqref{eq:temp6}, thus concluding \eqref{eq:needtoshoww} for the case $x_2 = y_k$.
\bigskip
\\
It only remains to prove \eqref{eq:needtoshoww} for the following case:
\bigskip
\\
\textbf{Case (2c):} $y_k > x_2 > x_1$.
\bigskip
\\
The computation for this case is similar to Case (2b). Let us consider the S6V location processes 
\begin{align*}
&\xp(t) = (\xpr_1(t) < \cdots < x'_{\ell - 1} (t) )  \, \text{ with initial state } \xp = (x_2 < \cdots < x_\ell),\\
&\xpp(t) = \leftvec{x''_1 (t)}{x''_{\ell-2} (t)} \text{ with initial state } \xpp = \leftvec{x_3}{x_\ell},
\end{align*}
and denote
\begin{align*}
&L_1 = \EE^{\xp} \big[\newH(\xp(1), \y)\big], \qquad L_2 = \EE^{\xpp} \big[\newH(\xpp(1), \y)\big],\\
&R_1 = \EE^{\y} \big[\newH(\xp, \vec{y}(1))\big], \qquad R_2 = \EE^{\y} \big[\newH(\xpp, \vec{y}(1))\big].
\end{align*} 
By \eqref{eq:induchypo}, we have 
\begin{equation}\label{eq:2cidhypo}
L_1 = R_1, \qquad L_2 = R_2.
\end{equation}
To conclude \eqref{eq:needtoshoww}, it suffices to show that 
\begin{align}\label{eq:2cx}
\EE^{\vec{x}} \big[\newH(\vec{x}(1), \vec{y})  \big]  
&= q^{-k} L_1 + q^{-(k-1)} b_2^{x_2 - x_1} \big(L_1 - q^{-k} L_2\big),\\
\label{eq:2cy}
\EE^{\vec{y}} \big[\newH(\vec{x}, \vec{y}(1))  \big] 
&= q^{-k} R_1 + q^{-(k-1)} b_2^{x_2 - x_1}\big(R_1 - q^{-k} R_2\big).
\end{align}
To prove \eqref{eq:2cx}, we first write
\begin{align}\label{eq:2cxe}
\leftexp = \EE^{\vec{x}} \big[\newH(\vec{x}(1), \vec{y})  \id_{\left\{x_1 (1) < x_2\right\}}  \big] + \EE^{\vec{x}} \big[\newH(\vec{x}(1), \vec{y})  \id_{\left\{x_1 (1) = x_2\right\}}  \big].
\end{align}
Similar as \eqref{eq:2bfirst}, the first term on the RHS of \eqref{eq:2cxe} can be expressed as  
\begin{align*}\numberthis \label{eq:2cxe1}
\EE^{\vec{x}} \big[\newH(\vec{x}(1), \vec{y})  \id_{\left\{x_1 (1) < x_2\right\}}  \big] 
= q^{-k} (1 - (1-b_1) b_2^{x_2 - x_1 - 1}) L_1,
\end{align*}
while the second term on the RHS of \eqref{eq:2cxe} equals 
\begin{align*} \numberthis \label{eq:2ctemp}
\EE^{\vec{x}} \big[\newH(\vec{x}(1), \vec{y})  \id_{\left\{x_1 (1) = x_2\right\}}  \big] &= \sum_{\substack{\vec{z} = (z_1 < \cdots < z_\ell) \\ z_1 = x_2}} \PP^{\vec{x}}\big(\vec{x}(1) = \vec{z}\big) \newH(\vec{z}, \vec{y}).
\end{align*}
Given $\vec{z} = \leftvec{z_1}{z_\ell}$ and $\zp = \leftvec{z_2}{z_\ell}$ with $z_1 = x_2$, we have 
\begin{equation*}
\PP^{\vec{x}}\big(\vec{x}(1) = \vec{z}\big) = b_2^{x_2 - x_1 - 1} \PP^{\xp}\big(\xp(1) = \zp\big), \qquad \newH(\vec{z}, \vec{y}) = q^{-k} \newH(\zp, \y).
\end{equation*}
Substituting back to \eqref{eq:2ctemp} yields
\begin{align*}
\EE^{\vec{x}} \big[\newH(\vec{x}(1), \vec{y})  \id_{\left\{x_1 (1) = x_2\right\}}  \big] &= q^{-k} b_2^{x_2 - x_1 - 1} \sum_{\substack{\vec{z'} = (z_2 < \cdots < z_\ell) \\ z_2 > x_2 }} \PP^{\xp}\big(\xp(1) = \zp\big)  \newH(\zp, \y),\\ 
&= q^{-k} b_2^{x_2 - x_1 - 1} \EE^{\xp} \big[\newH(\xp(1), \y) \id_{\left\{x'_1 (1) > x_2\right\}}\big].
\end{align*}
Consequently, 
\begin{align*}
\numberthis \label{eq:2cxee1}
\EE^{\vec{x}} \big[\newH(\vec{x}(1), \vec{y})  \big]  = q^{-k} b_2^{x_2 - x_1 - 1} \bigg(L_1 - \EE^{\xp} \big[\newH(\xp(1), \y) \id_{\left\{x'_1 (1) = x_2\right\}}\big]\bigg).
\end{align*}
By Lemma \ref{lem:temp}, we have  \begin{equation*}
\EE^{\xp} \big[\newH(\xp(1), \y) \id_{\left\{x'_1 (1) = x_2\right\}}\big] = q^{-k} b_1 \EE^{\xpp} \big[\newH(\xpp(1), \y)\big] = q^{-k} b_1 L_2.
\end{equation*}
Using \eqref{eq:2cxee1}, 
\begin{align*}\numberthis \label{eq:2cxe2}
\EE^{\vec{x}} \big[\newH(\vec{x}(1), \vec{y})  \id_{\left\{x_1 (1) = x_2\right\}}  \big] 
=  q^{-k} b_2^{x_2 - x_1 - 1} \big(L_1 - q^{-k} b_1 L_2 \big).
\end{align*} 
Plugging \eqref{eq:2cxe1} and \eqref{eq:2cxe2} into the RHS of \eqref{eq:2cxe} yields 
\begin{align*}
\EE^{\vec{x}} \big[\newH(\vec{x}(1), \vec{y})  \big] &=  q^{-k} (1 -  (1 - b_1) b_2^{x_2 -  x_1 - 1}) L_1 + q^{-k} b_2^{x_2 - x_1 - 1} \big(L_1 - q^{-k} b_1 L_2 \big), \\ 
&= q^{-k} L_1 + q^{-(k-1)} b_2^{x_2 - x_1} \big(L_1 - q^{-k} L_2\big),
\end{align*}
We conclude \eqref{eq:2cx}. Here, in the last line above we used again the relation $b_1 = q b_2$. 
\bigskip
\\
We turn to demonstrate \eqref{eq:2cy}. As  $y_k(1) < x_1$ implies $\newH(\vec{x}, \vec{y}(1)) = 0$, we have
\begin{align*}
\EE^{\vec{y}} \big[\newH(\vec{x}, \vec{y}(1))  \big] &= \EE^{\vec{y}} \big[\newH(\vec{x}, \vec{y}(1))  \id_{\left\{y_k(1) > x_1\right\}} \big] + \EE^{\vec{y}} \big[\newH(\vec{x}, \vec{y}(1))   \id_{\left\{y_k(1) = x_1\right\}}\big].
\end{align*}
Since $y_{k} (1) > x_1$ implies $\newH(\x, \y(1)) = q^{-k} \newH(\xp, \y(1))$,
\begin{equation*}
\EE^{\vec{y}} \big[\newH(\vec{x}, \vec{y}(1))  \id_{\left\{y_k(1) > x_1\right\}} \big] = q^{-k} \EE^{\vec{y}} \big[\newH(\xp
, \vec{y}(1))  \id_{\left\{y_k(1) > x_1\right\}} \big] = q^{-k} \EE^{\vec{y}} \big[\newH(\xp
, \vec{y}(1))   \big] = q^{-k} R_1.
\end{equation*}
Consequently, 
\begin{align*}
\numberthis \label{eq:2cye1}
\EE^{\vec{y}} \big[\newH(\vec{x}, \vec{y}(1))  \big] 
= q^{-k} R_1 + \EE^{\vec{y}} \big[\newH(\vec{x}, \vec{y}(1))   \id_{\left\{y_k(1) = x_1\right\}}\big].
\end{align*}
For the second term on the RHS of \eqref{eq:2cye1}, we have 
\begin{align*}\numberthis \label{eq:2cyee1}
\EE^{\vec{y}} \big[\newH(\vec{x}, \vec{y}(1))   \id_{\left\{y_k(1) = x_1\right\}}\big] = \sum_{\substack{\vec{w} = (w_1 > \cdots > w_k)\\ w_k = x_1}} \PP^{\vec{y}} \big(\y(1) = \vec{w}\big) \newH(\vec{x}, \vec{w})
=\sum_{\substack{\vec{w} = (w_1 > \cdots > w_k)  \\  w_k = x_1, w_{k-1} \geq y_k}}  \PP^{\y} \big(\y(1) = \vec{w}\big) \newH (\vec{x}, \vec{w}).
\end{align*}
Since $y_k > x_2$, given $\vec{w} = (w_1 > \cdots > w_{k-1} > x_1)$ and $\vec{w'} = (w_1 > \cdots > w_{k-1} > x_2)$ satisfying $w_{k-1} \geq y_k$, we have 
\begin{equation*}
\PP^{\vec{y}} \big(\vec{y}(1) = \vec{w}\big) = b_2^{x_2 -  x_1} \PP^{\y}\big(\vec{y}(1) = \wp\big), \qquad \newH(\vec{x}, \vec{w}) = q^{-(k-1)} \newH(\xp, \wp).
\end{equation*}
Substituting back to the RHS of \eqref{eq:2cyee1} yields
\begin{align*}
\EE^{\vec{y}} \big[\newH(\vec{x}, \vec{y}(1))   \id_{\left\{y_k(1) = x_1\right\}}\big] &= q^{-(k-1)} b_2^{x_2 - x_1} \sum_{\substack{\wp = (w_1 > \cdots > w_k)\\ w_k = x_2, w_{k-1} \geq y_k}} \PP^{\y}\big(\vec{y}(1) = \wp\big) \newH(\xp, \wp),\\
&= q^{-(k-1)} b_2^{x_2 -x_1} \EE^{\y} \big[\newH(\xp, \vec{y}(1)) \id_{\left\{y_k(1) = x_2\right\}}\big].
\end{align*} 
Consequently,
\begin{align*}
\numberthis \label{eq:2cyeee}
\EE^{\vec{y}} \big[\newH(\vec{x}, \vec{y}(1))   \id_{\left\{y_k(1) = x_1\right\}}\big] &=q^{-(k-1)} b_2^{x_2 -x_1} \big(R_1 - \EE^{\y} \big[\newH(\xp, \vec{y}(1)) \id_{\left\{y_k(1) > x_2\right\}}\big]\big).
\end{align*}
Under event $\left\{y_k (1) > x_2\right\}$, we have $\newH(\xp, \y(1)) = q^{-k} \newH(\xpp, \y(1))$ and accordingly
$$\EE^{\y} \big[\newH(\xp, \vec{y}(1)) \id_{\left\{y_k(1) > x_2\right\}}\big] = q^{-k} \EE^{\y} \big[\newH(\xpp, \vec{y}(1) \id_{\left\{y_k(1) > x_2\right\}}) \big] = q^{-k} \EE^{\y} \big[\newH(\xpp, \vec{y}(1)  \big] = q^{-k} R_2.$$
Therefore, we have by \eqref{eq:2cyeee}
\begin{align*}\numberthis \label{eq:2cye2}
\EE^{\vec{y}} \big[\newH(\vec{x}, \vec{y}(1))   \id_{\left\{y_k(1) = x_1\right\}}\big] 
&= q^{-(k-1)} b_2^{x_2  -x_1} R_1 - q^{-(2k-1)} b_2^{x_2  -x_1} R_2. 
\end{align*}
Substituting \eqref{eq:2cye2} back to \eqref{eq:2cye1} entails 
\begin{align*}
\EE^{\vec{y}} \big[\newH(\vec{x}, \vec{y}(1))  \big] &= q^{-k} R_1 + q^{-(k-1)} b_2^{x_2  -x_1} R_1 - q^{-(2k-1)} b_2^{x_2  -x_1} R_2, \\
&= q^{-k} R_1 + q^{-(k-1)} b_2^{x_2 - x_1}\big(R_1 - q^{-k} R_2\big).
\end{align*}
which concludes the desired \eqref{eq:2cy}.
\bigskip
\\
As all the possible cases for $x \in \XX^\ell$ and $\vec{y} \in \YY^k$ were discussed, we justify \eqref{eq:needtoshoww} and conclude Theorem \ref{thm:main}.
\end{proof}

%
%
%
%
%
\bibliographystyle{alpha}
\bibliography{S6Vfinal1.bib}
\end{document}